\titleformat{\subsection}{\it}{\thesubsection.\enspace}{1pt}{}
\newtheorem{theo}{Theorem}[section]
\newtheorem{lemm}[theo]{Lemma}
\newtheorem{prop}[theo]{Proposition}
\numberwithin{equation}{section}
\begin{document}
\title{Persistence property and the local well-posedness of the modified Camassa-Holm equation in critical Besov equation
	\hspace{-4mm}
}

\author{Zhen $\mbox{He}^1$ \footnote{Email: hezh56@mail2.sysu.edu.cn},\quad
	Zhaoyang $\mbox{Yin}^{1,2}$\footnote{E-mail: mcsyzy@mail.sysu.edu.cn}\\
$^1\mbox{Department}$ of Mathematics,
Sun Yat-sen University, Guangzhou 510275, China\\
$^2\mbox{School}$ of Science,\\ Shenzhen Campus of Sun Yat-sen University, Shenzhen 518107, China}

\date{}
\maketitle
\hrule

\begin{abstract}
	In this paper, we first establish the local well-posednesss for the Cauchy problem of a modified Camassa-Holm (MOCH) equation in critical Besov spaces $B^{\frac 1 p}_{p,1}$ with $1\leq p<+\infty.$ The obtained results  improve considerably the recent result in \cite{Luo1}. Then we show the persiscence property of MOCH.
	
\vspace*{5pt}
\noindent {\it 2020 Mathematics Subject Classification}: 35Q30, 35Q84, 76B03, 76D05.
	
	\vspace*{5pt}
	\noindent{\it Keywords}: A modified Camassa-Holm equation; local well-posedness problem ; Persistence property.
\end{abstract}

\vspace*{10pt}

\tableofcontents

	\section{Introduction}
	
	  ~~In this paper,  we consider the Cauchy problem of the following modified Camassa-Holm (MOCH) equation \cite{2021}
	\begin{equation}\label{eq0}
		\left\{\begin{array}{l}
			\gamma_t=\lambda(v_x-\gamma-\frac{\lambda}{1}v\gamma)_x
			,  \quad t>0,\ x\in\mathbb{R},  \\
			v_{xx}=\gamma_x+\frac {\gamma^2} {2\lambda} ,  \quad t\geq0,\ x\in\mathbb{R},  \\
			\gamma(0,x)=\gamma_0(x),  \quad x\in\mathbb{R},
		\end{array}\right.
	\end{equation}
	which was called by Gorka and Reyes\cite{GR2010}. Let $G={\partial_x}^2-1,m=Gv$
	
	The equation \eqref{eq0} can be rewritten as
	\begin{equation}\label{eq1}
		\left\{\begin{array}{l}
			\gamma_t+G^{-1}m\gamma_x=\frac {\gamma^2}{2} +\lambda G^{-1}m-
			\gamma G^{-1}m_x,  \quad t>0,\ x\in\mathbb{R},  \\
			m=\gamma_x+\frac {\gamma^2} {2\lambda} ,  \quad t\geq0,\ x\in\mathbb{R},  \\
			\gamma(0,x)=\gamma_0(x),  \quad x\in\mathbb{R}.
		\end{array}\right.
	\end{equation}

	  The equation $\eqref{eq0}$ was first studied through the geometric approach in\cite{Chern} \cite{Reyes}. Conservation laws and the existence and uniqueness of weak solutions to the
	modified Camassa–Holm equation were presented in \cite{GR2010}. We observe that if we solve
	\eqref{eq1}, then $m$ will formally satisfy the following physical form of the Camassa–Holm

	\begin{align}
		m_t=-2vm_x-mv_x+\lambda v_x
	\end{align}

	  If $\lambda=0$, it is known as the well-known Camassa–Holm (CH) equation. 
	As far as we know, the CH equation has many properties, such as: integrability  \cite{Constantin2001,Constantin2006,Constantin1999},  Hamiltonian structure, infinitely many conservation laws \cite{Fuchssteiner1981,Constantin2001}. The local well-posedness  of  CH equation  has been proved in  Sobolev spaces $H^s, s>\frac 3 2$ and in Besov space $B^{s}_{p,r}$ with $ s>\max\{\frac{3}{2},1+\frac 1 p \}$ or $s=1+\frac 1 p,p\in [1,2],r=1$ in \cite{Constantin1998,Constantin1998w-l,Danchin2001inte,Danchin2003wp,Liu2019ill,Himonas2012CN,Li2016nwpC}. In \cite{Constantin2000gw,Xin2000ws}, the authors showed  the existence and uniqueness of global weak solutions for the CH equation. In addition, Bressan and Constantin studied  the existence of the global conservative solutions \cite{book}  and global disspative solutions \cite{Bressan2007gd}  in $H^1(\mathbb{R}).$  Recently, Ye, Yin and Guo \cite{2021} gave the local well-posedness in $B^{1+\frac 1 p}_{p,1}$ with $p\in (1,+\infty].$ For the ill-posed problems,  the author \cite{Byers2006illhs} proved that CH equation is ill-posed in $H^{s}(\mathbb{R}), s<\frac 3 2.$ Danchin  proved that the CH equation is ill-posed in $B^{\frac 3 2}_{p,\infty} $  \cite{Danchin2001inte,Danchin2003wp}  and Guo et al. discussed  the equation for the shallow water wave type is ill-posed in $H^{\frac 3 2}$ and  $B^{1+\frac 1 p}_{p,r}$ with $ p\in [1,+\infty], r\in (1,+\infty]$   \cite{Liu2019ill}. In \cite{Li2021}, Li, Yu et.al  showed the ill-posedness of the Camassa-Holm equation in $B^{s}_{p,\infty}$ for $s>2+\max\{\frac 3 2, 1+\frac 1  p\}, 1\leq p\leq +\infty.$
	
	  Luo,Qiao and Yin studied the locally well-posedness in $B^s_{p,r},s\textgreater \max\{\frac{1}{2},\frac{1}{p}\}$ or $s=\frac{1}{p},1\leq p\leq 2,r=1$, blow up phenomena, global existence for periodic MOCH and global conservative solution\cite{Luo1}\cite{Luo2}
	\cite{Luo3}.
	
	  This paper is aimed to complete the locally well-posedness in critical Besov space $B^{\frac 1 p}_{p,1}$ with $1\leq p<+\infty$. For existence, we use the compactness method. For uniqueness, due to the defect of remainder operator in Bony decompositon, we use the characteristic line to overcome this diffculty. Although the regularity is still a hard problem to deal with, the good structure of the transport term provide us a way to lift regularity by using operator $\partial_x-1$. We will transform\eqref{eq1} to a better form about n to obtain the uniqueness of $n$, then the uniqueness of $\gamma$ can be deduced from the uniqueness of $n$. For the continuous dependence, the first two steps provide us with great convenience to the problem. And in the last part of the paper, we will talk about the persiscence property. 
	
	\section{\textbf{Preliminaries}}
	
	In this section, we will present some propositions about the Littlewood-Paley decomposition and Besov spaces.
	\begin{prop}\cite{Chemin2011,He2017}
		Let $s\in\mathbb{R},\ 1\leq p,p_1,p_2,r,r_1,r_2\leq\infty.$  \\
		(1) $B^s_{p,r}$ is a Banach space, and is continuously embedded in $\mathcal{S}'$. \\
		(2) If $r<\infty$, then $\lim\limits_{j\rightarrow\infty}\|S_j u-u\|_{B^s_{p,r}}=0$. If $p,r<\infty$, then $C_0^{\infty}$ is dense in $B^s_{p,r}$. \\
		(3) If $p_1\leq p_2$ and $r_1\leq r_2$, then $ B^s_{p_1,r_1}\hookrightarrow B^{s-d(\frac 1 {p_1}-\frac 1 {p_2})}_{p_2,r_2}. $
		If $s_1<s_2$, then the embedding $B^{s_2}_{p,r_2}\hookrightarrow B^{s_1}_{p,r_1}$ is locally compact. \\
		(4) $B^s_{p,r}\hookrightarrow L^{\infty} \Leftrightarrow s>\frac d p\ \text{or}\ s=\frac d p,\ r=1.
		\quad $ \\
		(5) Fatou property: if $(u_n)_{n\in\mathbb{N}}$ is a bounded sequence in $B^s_{p,r}$, then an element $u\in B^s_{p,r}$ and a subsequence $(u_{n_k})_{k\in\mathbb{N}}$ exist such that
		$$ \lim_{k\rightarrow\infty}u_{n_k}=u\ \text{in}\ \mathcal{S}'\quad \text{and}\quad \|u\|_{B^s_{p,r}}\leq C\liminf_{k\rightarrow\infty}\|u_{n_k}\|_{B^s_{p,r}}. $$
		(6) Let $m\in\mathbb{R}$ and $f$ be a $S^m$- multiplier, (i.e. f is a smooth function and satisfies that $\forall\alpha\in\mathbb{N}^d$, $\exists C=C(\alpha)$, such that $|\partial^{\alpha}f(\xi)|\leq C(1+|\xi|)^{m-|\alpha|},\ \forall\xi\in\mathbb{R}^d)$.
		Then the operator $f(D)=\mathcal{F}^{-1}(f\mathcal{F}\cdot)$ is continuous from $B^s_{p,r}$ to $B^{s-m}_{p,r}$.
	\end{prop}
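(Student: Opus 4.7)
The proposition is a compendium of six standard facts in Littlewood–Paley theory, so the plan is not to build machinery from scratch but to package each item in terms of the dyadic block operators $\Delta_j$ and partial sums $S_j$, then reduce everything to Bernstein's inequalities, Young's inequality for sequences, and Fatou's lemma for $\ell^r(L^p)$. I will group the six items by the technique they need: (1)–(2) are completeness/density statements that reduce to $\ell^r(L^p)$-type properties; (3)–(4) are embedding statements that follow from Bernstein; (5) is an abstract weak-compactness fact; and (6) is an operator-theoretic bound for Fourier multipliers.

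For (1), I would check that $\|u\|_{B^s_{p,r}}=\|(2^{js}\|\Delta_j u\|_{L^p})_j\|_{\ell^r}$ is a norm modulo polynomials, then show completeness by taking a Cauchy sequence $(u_n)$, extracting an $\ell^r(L^p)$ limit of the dyadic blocks and reconstructing the limit as a tempered distribution; the embedding into $\mathcal{S}'$ comes from estimating $\langle u,\varphi\rangle=\sum_j \langle \Delta_j u,\varphi\rangle$ by pairing $L^p$ blocks against Schwartz functions and using the frequency support to absorb the $2^{js}$ weights. For (2), $S_j u\to u$ is just the tail estimate $\sum_{k\geq j} 2^{ksr}\|\Delta_k u\|_{L^p}^r\to 0$, valid when $r<\infty$; density of $C_0^\infty$ then follows by composing with a convolution by a compactly supported mollifier and cutting off spatially, using the fact that $S_j u$ is already smooth and bandlimited. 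For (3), the first embedding follows from Bernstein, $\|\Delta_j u\|_{L^{p_2}}\lesssim 2^{jd(1/p_1-1/p_2)}\|\Delta_j u\|_{L^{p_1}}$, and monotonicity of $\ell^r$; local compactness follows from Bernstein plus Rellich on each dyadic block combined with a tail cutoff using $s_1<s_2$.

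For (4), Bernstein again gives $\|\Delta_j u\|_{L^\infty}\lesssim 2^{jd/p}\|\Delta_j u\|_{L^p}$, so summation converges iff $s>d/p$ or ($s=d/p$ and $r=1$); the converse direction is obtained by testing against lacunary-frequency Schwartz functions to rule out other parameters. For (5), I would use the fact that $(\Delta_j u_n)_n$ is bounded in $L^p$ with frequency support in a fixed annulus (hence in a space with good weak-$*$ compactness for $1<p\leq\infty$, or via a compact embedding argument for $p=1$), then extract a diagonal subsequence convergent in $\mathcal{S}'$, and pass to the limit in the Besov norm by Fatou on $\ell^r$. For (6), since $f(D)\Delta_j u$ has Fourier support in the same annulus as $\Delta_j u$ up to low-frequency tails, a convolution-kernel estimate using the $S^m$ bounds on $f$ yields $\|f(D)\Delta_j u\|_{L^p}\lesssim 2^{jm}\|\Delta_j u\|_{L^p}$, which is exactly the boundedness from $B^s_{p,r}$ to $B^{s-m}_{p,r}$.

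The only delicate point I anticipate is the density statement in (2) when $p=\infty$ or $r=\infty$, which actually fails in the stated form; but since the proposition restricts to $p,r<\infty$ for density, this is not a real obstruction, only a reminder to keep track of the hypotheses. The critical limiting case $s=d/p$, $r=1$ in (4) is the only place Bernstein alone is not enough in spirit, requiring the sharp summability endpoint; this is where I would be most careful, since it underlies the later use of the critical Besov space $B^{1/p}_{p,1}$ throughout the paper.
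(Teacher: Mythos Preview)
Your sketch is the standard route and is essentially correct, but note that the paper does not prove this proposition at all: it is stated with citations to \cite{Chemin2011,He2017} and used as a black box, so there is no ``paper's own proof'' to compare against. The arguments you outline (Bernstein for the embeddings (3)--(4) and the multiplier bound (6), tail summability for (2), diagonal extraction plus Fatou for (5), and the $\ell^r(L^p)$ completeness argument for (1)) are exactly what one finds in the cited references, in particular in the Bahouri--Chemin--Danchin monograph, so your proposal matches the intended source rather than differing from it.
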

	We give two useful interpolation inequalities.
	\begin{prop}\label{prop}\cite{Chemin2011,He2017}
		(1) If $s_1<s_2$, $\theta \in (0,1)$, and $(p,r)$ is in $[1,\infty]^2$, then we have
		$$ \|u\|_{B^{\theta s_1+(1-\theta)s_2}_{p,r}}\leq \|u\|_{B^{s_1}_{p,r}}^{\theta}\|u\|_{B^{s_2}_{p,r}}^{1-\theta}. $$
		(2) If $s\in\mathbb{R},\ 1\leq p\leq\infty,\ \varepsilon>0$, a constant $C=C(\varepsilon)$ exists such that
		$$ \|u\|_{B^s_{p,1}}\leq C\|u\|_{B^s_{p,\infty}}\ln\Big(e+\frac {\|u\|_{B^{s+\varepsilon}_{p,\infty}}}{\|u\|_{B^s_{p,\infty}}}\Big). $$
	\end{prop}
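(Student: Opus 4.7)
The plan is to establish both interpolation inequalities via dyadic decomposition, working directly from the definition $\|u\|_{B^s_{p,r}} = \bigl\|2^{js}\|\Delta_j u\|_{L^p}\bigr\|_{\ell^r(\mathbb{Z})}$.

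For part (1), I would write, for each dyadic block indexed by $j$,
\[
2^{j(\theta s_1+(1-\theta)s_2)}\|\Delta_j u\|_{L^p}
= \bigl(2^{js_1}\|\Delta_j u\|_{L^p}\bigr)^{\theta}\bigl(2^{js_2}\|\Delta_j u\|_{L^p}\bigr)^{1-\theta},
\]
and then take the $\ell^r$ norm in $j$. Applying Hölder's inequality in the $\ell^r$ sense with conjugate exponents $1/\theta$ and $1/(1-\theta)$ (treating the $r=\infty$ case by the trivial bound $\sup \leq \sup^\theta \sup^{1-\theta}$) produces exactly the claimed product bound. This step is routine; no subtlety arises because the exponents are positive and sum to one.

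For part (2), the strategy is the classical low/high frequency split at a threshold $N\in\mathbb{N}$ to be optimized. For the low-frequency part I would bound
\[
\sum_{j<N}2^{js}\|\Delta_j u\|_{L^p}\leq N\|u\|_{B^s_{p,\infty}},
\]
using the supremum bound inside the sum, and for the high-frequency tail I would extract the $\varepsilon$ gain:
\[
\sum_{j\geq N}2^{js}\|\Delta_j u\|_{L^p}
=\sum_{j\geq N}2^{-j\varepsilon}\,2^{j(s+\varepsilon)}\|\Delta_j u\|_{L^p}
\leq C\,2^{-N\varepsilon}\|u\|_{B^{s+\varepsilon}_{p,\infty}},
\]
via the geometric series in $2^{-\varepsilon}$. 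Summing, the low index $j=0$ contribution and the negative indices $j<0$ need a separate but easy treatment (they are absorbed because on $\mathbb{Z}$ the Littlewood-Paley decomposition is replaced by one augmented with $S_0$, or since $B^s_{p,\infty}\hookrightarrow B^s_{p,\infty}$ handles them uniformly in the same way).

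The only genuine content lies in choosing $N$ to balance the two terms. I would select $N$ to be the least integer with $2^{-N\varepsilon}\|u\|_{B^{s+\varepsilon}_{p,\infty}}\leq \|u\|_{B^s_{p,\infty}}$, i.e.
\[
N\sim \tfrac{1}{\varepsilon}\log_2\!\Bigl(e+\tfrac{\|u\|_{B^{s+\varepsilon}_{p,\infty}}}{\|u\|_{B^s_{p,\infty}}}\Bigr),
\]
where the $e$ is inserted so that $N\geq 1$ even when the high norm is not much larger than the low one. Plugging this back yields the inequality with the constant $C$ depending only on $\varepsilon$. I expect the main obstacle, such as it is, to be bookkeeping: making sure the logarithmic factor is bounded below by $1$ (hence the $e+\cdot$ on the right), and confirming that the case where $\|u\|_{B^s_{p,\infty}}=0$ is handled by density or by passing to the limit, since then $u=0$ and the inequality is trivial. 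Both parts then close cleanly.
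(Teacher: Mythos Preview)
Your argument is correct and is precisely the standard proof one finds in the references the paper cites. Note, however, that the paper itself offers no proof of this proposition: it is quoted as a known result from \cite{Chemin2011,He2017}, so there is no ``paper's own proof'' to compare against. Your H\"older-in-$\ell^r$ step for (1) and the low/high frequency split with threshold optimization for (2) are exactly the arguments given in those sources, modulo the minor bookkeeping you already flagged (the inhomogeneous dyadic sum starts at $j=-1$, so the low-frequency block contributes $N+1$ terms rather than $N$, which is harmless).
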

	\begin{prop}\cite{Chemin2011}
		Let $s\in\mathbb{R},\ 1\leq p,r\leq\infty.$
		\begin{equation*}\left\{
			\begin{array}{l}
				B^s_{p,r}\times B^{-s}_{p',r'}\longrightarrow\mathbb{R},  \\
				(u,\phi)\longmapsto \sum\limits_{|j-j'|\leq 1}\langle \Delta_j u,\Delta_{j'}\phi\rangle,
			\end{array}\right.
		\end{equation*}
		defines a continuous bilinear functional on $B^s_{p,r}\times B^{-s}_{p',r'}$. Denoted by $Q^{-s}_{p',r'}$ the set of functions $\phi$ in $\mathcal{S}'$ such that
		$\|\phi\|_{B^{-s}_{p',r'}}\leq 1$. If $u$ is in $\mathcal{S}'$, then we have
		$$\|u\|_{B^s_{p,r}}\leq C\sup_{\phi\in Q^{-s}_{p',r'}}\langle u,\phi\rangle.$$
	\end{prop}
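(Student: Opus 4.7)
The plan is to split the argument into two parts: first establish continuity of the bilinear pairing, then build an almost-extremal test function to reverse the inequality.

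For continuity I would apply H\"older twice. H\"older in $x$ gives $|\langle \Delta_j u,\Delta_{j'}\phi\rangle|\leq \|\Delta_j u\|_{L^p}\|\Delta_{j'}\phi\|_{L^{p'}}$. Inserting the factors $2^{js}$ and $2^{-j's}$, and using that $|j-j'|\leq 1$ keeps $2^{(j'-j)s}$ bounded by a constant depending only on $s$, a second H\"older with exponents $(r,r')$ in the sum over $j$ yields the bound $C\|u\|_{B^s_{p,r}}\|\phi\|_{B^{-s}_{p',r'}}$. This also establishes absolute convergence of the series and hence well-definedness.

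For the reverse inequality I would construct a near-extremal $\phi$ by a double duality. Setting $c_j:=2^{js}\|\Delta_j u\|_{L^p}$, one has $\|u\|_{B^s_{p,r}}=\|(c_j)\|_{\ell^r}$. I would choose $(d_j)$ in the unit ball of $\ell^{r'}$ so that $\sum_j d_j c_j$ is within $\varepsilon$ of $\|u\|_{B^s_{p,r}}$, and for each $j$ pick $g_j$ in the unit ball of $L^{p'}$ with $\langle\Delta_j u,g_j\rangle$ close to $\|\Delta_j u\|_{L^p}$. Setting
\[
\phi := \sum_j 2^{js}\,d_j\,\widetilde{\Delta}_j g_j,
\]
with $\widetilde{\Delta}_j$ a thickened dyadic projector that equals the identity on the Fourier support of $\Delta_j$, the spectral-support relation $\Delta_{j'}\widetilde{\Delta}_j = 0$ for $|j-j'|$ large collapses $\sum_{|j-j'|\leq 1}\langle\Delta_j u,\Delta_{j'}\phi\rangle$ into essentially $\sum_j d_j\,2^{js}\langle\Delta_j u,g_j\rangle$, which is within $2\varepsilon$ of $\|u\|_{B^s_{p,r}}$. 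Meanwhile, only finitely many terms per $j'$ survive after applying $\Delta_{j'}$ to $\phi$, so $\|\phi\|_{B^{-s}_{p',r'}}$ stays bounded by a universal constant, allowing a renormalisation to land in $Q^{-s}_{p',r'}$.

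The most delicate step will be the endpoint regime $r=\infty$ or $p=\infty$, where $\ell^r$ and $L^p$ are not the duals of $\ell^{r'}$ and $L^{p'}$. Strict realisation of the norm by a unit vector in the predual is then unavailable and one loses a universal multiplicative constant, which explains why the stated conclusion is an inequality with a constant $C$ rather than an equality. I expect to handle these endpoints by first truncating to finitely many $j$, running the reflexive-case argument on the truncation, and then passing to the limit by means of the Fatou property from item (5) of the first proposition, which controls the weak limit of $\phi$ in $B^{-s}_{p',r'}$ by the uniform bound along the approximants.
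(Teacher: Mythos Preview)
The paper does not supply its own proof of this proposition; it is quoted verbatim as a preliminary result from \cite{Chemin2011} and no argument is given in the body of the paper. Your sketch is correct and is essentially the standard proof one finds in that reference: H\"older in $x$ followed by H\"older in the dyadic index for continuity, and a near-extremal $\phi$ built from dual functions $g_j\in L^{p'}$ and a dual sequence $(d_j)\in\ell^{r'}$ for the reverse inequality.

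One small remark: the endpoint cases $p=\infty$ or $r=\infty$ are less delicate than you suggest. You do not need reflexivity, only that $L^{p'}$ norms $L^{p}$ and $\ell^{r'}$ norms $\ell^{r}$, which holds for all $1\leq p,r\leq\infty$ since $L^1$ is a predual of $L^\infty$ and $\ell^1$ is a predual of $\ell^\infty$. So approximate extremisers $g_j$ and $(d_j)$ exist directly, and the truncation-plus-Fatou detour is unnecessary; the constant $C$ arises only from the overlap of the thickened projectors $\widetilde{\Delta}_j$ and the factor $2^{|s|}$ coming from $|j-j'|\leq 1$.
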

	We then have the following product laws:
	\begin{lemm}\label{product}\cite{Chemin2011,He2017}
		(1) For any $s>0$ and any $(p,r)$ in $[1,\infty]^2$, the space $L^{\infty} \cap B^s_{p,r}$ is an algebra, and a constant $C=C(s,d)$ exists such that
		$$ \|uv\|_{B^s_{p,r}}\leq C(\|u\|_{L^{\infty}}\|v\|_{B^s_{p,r}}+\|u\|_{B^s_{p,r}}\|v\|_{L^{\infty}}). $$
		(2) If $1\leq p,r\leq \infty,\ s_1\leq s_2,\ s_2>\frac{d}{p} (s_2 \geq \frac{d}{p}\ \text{if}\ r=1)$ and $s_1+s_2>\max(0, \frac{2d}{p}-d)$, there exists $C=C(s_1,s_2,p,r,d)$ such that
		$$ \|uv\|_{B^{s_1}_{p,r}}\leq C\|u\|_{B^{s_1}_{p,r}}\|v\|_{B^{s_2}_{p,r}}. $$
		(3) If $1\leq p\leq 2$,  there exists $C=C(p,d)$ such that
		$$ \|uv\|_{B^{\frac d p-d}_{p,\infty}}\leq C \|u\|_{B^{\frac d p-d}_{p,\infty}}\|v\|_{B^{\frac d p}_{p,1}}. $$
	\end{lemm}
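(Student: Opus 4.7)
The three estimates all come from Bony's paraproduct decomposition
$$uv = T_u v + T_v u + R(u,v),$$
where $T_u v = \sum_{j\in\mathbb{Z}} S_{j-1} u\,\Delta_j v$ has spectrum supported in annuli of size $2^j$, while the remainder $R(u,v) = \sum_{|j-j'|\leq 1}\Delta_j u\,\Delta_{j'} v$ is only ball-localized. My plan is to estimate each of the three pieces separately via Bernstein's inequality and sum the resulting dyadic norms in $\ell^r$.

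For part (1), I would use $\|S_{j-1} u\|_{L^\infty}\leq\|u\|_{L^\infty}$ and the annular support of the blocks of $T_u v$ to obtain
$$2^{js}\|\Delta_j(T_u v)\|_{L^p}\lesssim \|u\|_{L^\infty}\,2^{js}\|\Delta_j v\|_{L^p},$$
which sums to $\|T_u v\|_{B^s_{p,r}}\lesssim \|u\|_{L^\infty}\|v\|_{B^s_{p,r}}$, and symmetrically for $T_v u$. For $R(u,v)$ the ball-to-annulus reconstruction costs a factor $2^{j(s_1+s_2)}$ in the dyadic estimates; pairing $L^\infty$ with $B^s_{p,r}$ makes the effective total regularity $s>0$, so the sum converges and yields the stated bilinear bound.

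For part (2), the hypothesis $s_2>d/p$ (or $s_2=d/p$ with $r=1$) gives $B^{s_2}_{p,r}\hookrightarrow L^\infty$, so $T_v u$ is controlled by $\|v\|_{B^{s_2}_{p,r}}\|u\|_{B^{s_1}_{p,r}}$ exactly as in (1). For $T_u v$ I need the low-frequency summation $\|S_{j-1} u\|_{L^\infty}\lesssim 2^{j\max(0,d/p-s_1)}\|u\|_{B^{s_1}_{p,r}}$; after this, the remaining factor $\Delta_j v$ retains enough regularity because $s_1\leq s_2$. The remainder converges under $s_1+s_2>\max(0,2d/p-d)$, which is precisely the stated hypothesis. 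Part (3) is the endpoint analogue in the negative-regularity regime: the paraproducts are handled using $B^{d/p}_{p,1}\hookrightarrow L^\infty$, and the remainder, landing in the negative space $B^{d/p-d}_{p,\infty}$, is estimated by duality against $B^{d-d/p}_{p',1}$, where the restriction $p\leq 2$ (hence $p'\geq 2$) is what prevents Bernstein losses.

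The main technical obstacle throughout is the remainder $R(u,v)$: unlike the paraproducts, it lacks annular spectral localization, so reconstructing its Besov norm requires the \emph{sum} of the two regularity indices to exceed the relevant dimensional threshold. In part (3) this is the most delicate point, because one regularity is negative, and only the specific balance $s_1+s_2=d/p$ together with the Sobolev-type duality produces a closed bilinear estimate; this is also why the range of $p$ must be restricted.
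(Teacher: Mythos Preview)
The paper does not prove this lemma; it is quoted verbatim as a known result with citations to \cite{Chemin2011,He2017}, so there is no in-paper argument to compare against. Your sketch via Bony's decomposition $uv=T_u v+T_v u+R(u,v)$ is precisely the standard route taken in those references, and the way you distribute the hypotheses---$s>0$ for the remainder in (1), $s_2>d/p$ for the embedding and $s_1+s_2>\max(0,2d/p-d)$ for the remainder in (2), and the endpoint balance in (3)---is correct.

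One small comment on part (3): your duality argument is a legitimate way to handle the remainder, but the more direct route in the cited literature is to note that for $1\leq p\leq 2$ one has $s_1+s_2=2d/p-d\geq 0$, so $R(u,v)$ lands in $B^{2d/p-d}_{p/2,\infty}$ by the usual remainder estimate, and then the embedding $B^{2d/p-d}_{p/2,\infty}\hookrightarrow B^{d/p-d}_{p,\infty}$ (which holds because $p/2\leq p$ with matching Sobolev exponent) closes the bound. The restriction $p\leq 2$ is needed exactly so that $p/2\geq 1$ is a legal Lebesgue index; your duality explanation reaches the same conclusion but obscures this simple reason.
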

	The Gronwall lemma as follows.
	\begin{lemm}\label{osgood}\cite{Chemin2011}
		Let $f(t),~ g(t)\in C^{1}([0,T]), f(t), g(t)>0.$ Let $h(t)$ is a continuous function on $[0,T].$ Assume that, for any $t\in [0,T]$ such that
		$$\frac 1 2 \frac{d}{dt}f^{2}(t)\leq h(t)f^{2}(t)+g(t)f(t).$$
		Then for any time $t\in [0,T],$ we have
		$$f(t)\leq f(0)exp\int_0^th(\tau)d\tau+\int_0^t g(\tau)\ exp(\int_\tau ^t h(\tau)dt')d\tau.$$
	\end{lemm}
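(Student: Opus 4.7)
The plan is to reduce the quadratic differential inequality to a standard linear one by exploiting the strict positivity of $f$, and then to invoke the integrating factor trick. Since $f\in C^{1}([0,T])$ and $f(t)>0$ throughout, the function $f^{2}$ is $C^{1}$ with $\frac{d}{dt}f^{2}(t)=2f(t)f'(t)$. Substituting this into the hypothesis gives $f(t)f'(t)\leq h(t)f^{2}(t)+g(t)f(t)$, and dividing through by $f(t)>0$ yields the pointwise linear estimate
\[
f'(t)\leq h(t)\,f(t)+g(t),\qquad t\in[0,T].
\]

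Next, I would introduce the antiderivative $H(t):=\int_{0}^{t}h(\tau)\,d\tau$ and apply the integrating factor $e^{-H(t)}$. Multiplying the linear inequality by $e^{-H(t)}$ and rearranging gives
\[
\frac{d}{dt}\!\left(f(t)\,e^{-H(t)}\right)=e^{-H(t)}\bigl(f'(t)-h(t)f(t)\bigr)\leq g(t)\,e^{-H(t)}.
\]
Integrating from $0$ to $t$ and multiplying the result by $e^{H(t)}$ gives
\[
f(t)\leq f(0)\,e^{H(t)}+\int_{0}^{t}g(\tau)\,e^{H(t)-H(\tau)}\,d\tau,
\]
which is exactly the claimed bound once one recognises that $H(t)-H(\tau)=\int_{\tau}^{t}h(t')\,dt'$.

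There is essentially no serious obstacle in this argument; the entire proof is a one–line reduction to a linear ODE inequality followed by the integrating factor computation. The only point that requires some care is the division by $f$, which relies on the strict positivity assumption $f>0$ on $[0,T]$. If one wished to relax that hypothesis (for instance to permit $f$ to vanish, with only $f^{2}$ assumed $C^{1}$), the standard workaround is to replace $f$ throughout by the regularised quantity $f_{\varepsilon}:=\sqrt{f^{2}+\varepsilon}$, carry out the argument above for $f_{\varepsilon}$, and send $\varepsilon\to 0^{+}$ using the monotone/dominated convergence of both sides. Under the stated assumptions this refinement is unnecessary, so the plan above already yields the full conclusion.
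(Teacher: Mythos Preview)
Your argument is correct: dividing by the strictly positive $f$ reduces the hypothesis to the linear inequality $f'\leq hf+g$, and the integrating factor $e^{-H(t)}$ with $H(t)=\int_0^t h$ yields the stated bound immediately. The paper does not supply its own proof of this lemma---it is quoted from \cite{Chemin2011}---so there is nothing to compare against; your proof is the standard one found in that reference.
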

	Now we state some useful results in the transport equation theory, which are important to the proofs of our main theorem later.
	\begin{equation}\label{transport}
		\left\{\begin{array}{l}
			f_t+v\cdot\nabla f=g,\ x\in\mathbb{R}^d,\ t>0, \\
			f(0,x)=f_0(x).
		\end{array}\right.
	\end{equation}
	\begin{lemm}\label{existence}\cite{Chemin2011}
		Let $1\leq p\leq p_1\leq\infty,\ 1\leq r\leq\infty,\ s> -d\min(\frac 1 {p_1}, \frac 1 {p'})$. Let $f_0\in B^s_{p,r}$, $g\in L^1([0,T];B^s_{p,r})$, and let $v$ be a time-dependent vector field such that $v\in L^\rho([0,T];B^{-M}_{\infty,\infty})$ for some $\rho>1$ and $M>0$, and
		$$
		\begin{array}{ll}
			\nabla v\in L^1([0,T];B^{\frac d {p_1}}_{p_1,\infty}), &\ \text{if}\ s<1+\frac d {p_1}, \\
			\nabla v\in L^1([0,T];B^{s-1}_{p,r}), &\ \text{if}\ s>1+\frac d {p_1}\ or\ (s=1+\frac d {p_1}\ and\ r=1).
		\end{array}
		$$
		Then the equation \eqref{transport} has a unique solution $f$ in \\
		-the space $C([0,T];B^s_{p,r})$, if $r<\infty$; \\
		-the space $\Big(\bigcap_{s'<s}C([0,T];B^{s'}_{p,\infty})\Big)\bigcap C_w([0,T];B^s_{p,\infty})$, if $r=\infty$.
	\end{lemm}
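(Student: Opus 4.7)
The plan is to follow the classical Littlewood-Paley/commutator strategy. Applying the dyadic block $\Delta_j$ to \eqref{transport} yields
\[ \partial_t \Delta_j f + v\cdot\nabla \Delta_j f = \Delta_j g + R_j, \qquad R_j := [v\cdot\nabla,\,\Delta_j] f. \]
A straightforward $L^p$ energy estimate on this localized transport equation (integrating along the flow of $v$ and using an integration by parts to handle $\operatorname{div} v$) gives
\[ \|\Delta_j f(t)\|_{L^p} \leq \|\Delta_j f_0\|_{L^p} + \int_0^t \Big( \|\Delta_j g\|_{L^p} + \|R_j\|_{L^p} + \tfrac{1}{p}\|\operatorname{div} v\|_{L^\infty}\|\Delta_j f\|_{L^p}\Big)\,d\tau. \]
Multiplying by $2^{js}$ and taking the $\ell^r_j$ norm reduces everything to a commutator estimate of the form $\big\|2^{js}\|R_j\|_{L^p}\big\|_{\ell^r_j}\lm \|\nabla v\|_{X}\|f\|_{B^s_{p,r}}$, where $X$ is exactly one of the two spaces listed in the hypothesis. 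The two clauses correspond to the two standard Bony-decomposition strategies: when $s<1+d/p_1$ one invokes the Vishik-type commutator lemma and places $\nabla v$ in $B^{d/p_1}_{p_1,\infty}$ (so that $f$ keeps all of its regularity), while in the case $s>1+d/p_1$ or $s=1+d/p_1,\ r=1$ one instead exploits that $B^s_{p,r}$ is an algebra and spends one derivative on $f$, paying the cost by $\|\nabla v\|_{B^{s-1}_{p,r}}$. Feeding this into Lemma \ref{osgood} gives the a priori estimate
\[ \|f\|_{L^\infty_t B^s_{p,r}} \lm e^{C V(t)} \Big( \|f_0\|_{B^s_{p,r}} + \int_0^t \|g(\tau)\|_{B^s_{p,r}}\,d\tau\Big), \]
with $V(t)$ the time integral of the appropriate norm of $\nabla v$.

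For existence, I would regularize by Friedrichs mollification: set $v^n:=J_n v$, $g^n:=J_n g$, $f_0^n:=J_n f_0$, and solve the smooth problem $\partial_t f^n+v^n\cdot\nabla f^n=g^n$ as a Banach-valued ODE. The uniform a priori bound applies to $(f^n)$ and produces a uniform bound in $L^\infty_T B^s_{p,r}$; combining it with the equation itself to bound $\partial_t f^n$ in a weaker space, I extract a weak-$*$ limit $f\in L^\infty_T B^s_{p,r}$ solving \eqref{transport}, the limit inheriting its norm via the Fatou property (Proposition 2.1(5)). Uniqueness follows directly from the same a priori estimate applied to the difference of two solutions in a slightly lower Besov scale (so that the multiplication by $v$ is admissible), which yields zero difference from zero data.

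For the time-regularity statement, in the case $r<\infty$ each $\Delta_j f$ solves a transport equation with smooth data and hence lies in $C([0,T];L^p)$; by Proposition 2.1(2) the Schwartz functions are dense and the $\ell^r_j$ sum converges uniformly, so dominated convergence gives $f\in C([0,T];B^s_{p,r})$. In the case $r=\infty$ uniform convergence of the $\ell^\infty$ tail fails, so one obtains only strong continuity in $B^{s'}_{p,\infty}$ for every $s'<s$ and weak-$*$ continuity at the endpoint, which is exactly the stated conclusion.

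The delicate step I expect is the commutator estimate in the borderline regime $s=1+d/p_1,\ r=1$: one must decompose $R_j$ via Bony's paraproduct into the three standard pieces (paraproduct of $v$ against $\nabla f$, paraproduct of $\nabla f$ against $v$, and the symmetric remainder) and control each in $\ell^1_j$ without picking up a logarithmic loss, which forces the use of the endpoint versions of the paraproduct/remainder estimates rather than the generic ones. Once that commutator bound is in hand, the rest of the proof is a routine Gronwall-plus-compactness argument.
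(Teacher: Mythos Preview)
The paper does not prove this lemma at all: it is stated as a quoted result from \cite{Chemin2011} (Bahouri--Chemin--Danchin) without any accompanying argument, and is then used as a black box in Step~1 of the proof of Theorem~\ref{the1}. So there is no ``paper's own proof'' to compare against.

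That said, your sketch is an accurate outline of the standard proof found in that reference (Theorem~3.14 and its surrounding lemmas in \cite{Chemin2011}): localize with $\Delta_j$, perform an $L^p$ estimate along the flow, control the commutator $[v\cdot\nabla,\Delta_j]f$ via Bony decomposition with the case split on $s$ versus $1+d/p_1$, close by Gronwall, and obtain existence via a Friedrichs regularization and compactness. Your identification of the borderline case $s=1+d/p_1$, $r=1$ as the delicate point is also correct. Nothing in your outline is wrong; it simply reproduces the textbook argument that the paper is citing rather than proving.
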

	\begin{lemm}\label{priori estimate}\cite{Chemin2011,Li2016nwpC}
		Let $s\in\mathbb{R},\ 1\leq p,r\leq\infty$.
		There exists a constant $C$ such that for all solutions $f\in L^{\infty}([0,T];B^s_{p,r})$ of \eqref{transport} in one dimension with initial data $f_0\in B^s_{p,r}$, and $g\in L^1([0,T];B^s_{p,r})$, we have for a.e. $t\in[0,T]$,
		$$ \|f(t)\|_{B^s_{p,r}}\leq \|f_0\|_{B^s_{p,r}}+\int_0^t \|g(t')\|_{B^s_{p,r}}dt'+\int_0^t V^{'} (t^{'})\|f(t)\|_{B^s_{p,r}}dt{'} $$
		or
		$$ \|f(t)\|_{B^s_{p,r}}\leq e^{CV(t)}\Big(\|f_0\|_{B^s_{p,r}}+\int_0^t e^{-CV(t')}\|g(t')\|_{B^s_{p,r}}dt'\Big) $$
		with
		\begin{equation*}
			V'(t)=\left\{\begin{array}{ll}
				\|\nabla v\|_{B^{s+1}_{p,r}},\ &\text{if}\ s>\max(-\frac 1 2,\frac 1 {p}-1), \\
				\|\nabla v\|_{B^{s}_{p,r}},\ &\text{if}\ s>\frac 1 {p}\ \text{or}\ (s=\frac 1 {p},\ p<\infty, \ r=1),
			\end{array}\right.
		\end{equation*}
		and when $s=\frac 1 p-1,\ 1\leq p\leq 2,\ r=\infty,\ \text{and}\ V'(t)=\|\nabla v\|_{B^{\frac 1 p}_{p,1}}$.\\
		If $f=v,$ for all $s>0, V^{'}(t)=\|\nabla v(t)\|_{L^{\infty}}.$
	\end{lemm}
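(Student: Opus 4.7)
The plan is to perform a standard Littlewood--Paley localization and close via Gronwall (Lemma \ref{osgood}). First I would apply the dyadic block $\Delta_j$ to \eqref{transport} to obtain
\begin{equation*}
\partial_t\Delta_j f + v\,\partial_x\Delta_j f = \Delta_j g + R_j,\qquad R_j := -[\Delta_j,\,v\,\partial_x]f.
\end{equation*}
An $L^p$ energy estimate, obtained by multiplying by $|\Delta_j f|^{p-2}\Delta_j f$, integrating over $\mathbb{R}$, and using the identity $\int v\,\partial_x\Delta_j f\,|\Delta_j f|^{p-2}\Delta_j f\,dx = -\tfrac{1}{p}\int(\partial_x v)|\Delta_j f|^p\,dx$, yields
\begin{equation*}
\|\Delta_j f(t)\|_{L^p} \leq \|\Delta_j f_0\|_{L^p} + \int_0^t\!\bigl(\|\Delta_j g\|_{L^p} + \|R_j\|_{L^p} + C\|\partial_x v\|_{L^\infty}\|\Delta_j f\|_{L^p}\bigr)\,d\tau.
\end{equation*}
For $p=\infty$ the same pointwise inequality is obtained along the flow of $v$, so the estimate is uniform in $p\in[1,\infty]$.

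The heart of the argument is the commutator bound
\begin{equation*}
\bigl\|\bigl(2^{js}\|R_j\|_{L^p}\bigr)_{j\geq -1}\bigr\|_{\ell^r}\leq C\,V'(t)\,\|f(t)\|_{B^s_{p,r}},
\end{equation*}
which I would establish through Bony's decomposition $v\,\partial_x f = T_v\,\partial_x f + T_{\partial_x f}v + R(v,\partial_x f)$. The principal term $[\Delta_j,T_v]\partial_x f$ is controlled by a first-order Taylor expansion of $v$ together with the almost-orthogonality of the blocks, producing a summable sequence weighted by $2^{-js}$. The off-diagonal pieces $\Delta_j(T_{\partial_x f}v)$ and $\Delta_j R(v,\partial_x f)$ are absorbed by the standard continuity of the paraproduct and remainder on Besov spaces. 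The condition on $s$ dictates which norm of $\nabla v$ suffices: in the regime $s>\frac{1}{p}$ (or $s=\frac{1}{p},\,r=1$), the embedding $B^s_{p,r}\hookrightarrow L^\infty$ (Proposition 2.1(4)) lets the $\|\partial_x v\|_{L^\infty}$ term be absorbed into a single Besov norm; in the lower-regularity regime $s>\max(-\tfrac12,\tfrac1p-1)$ the remainder term loses a derivative, forcing the stronger norm of $\nabla v$ recorded in the statement. The endpoint $s=\frac{1}{p}-1,\,r=\infty$ requires the logarithmic interpolation of Proposition \ref{prop}(2).

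Multiplying the energy inequality by $2^{js}$, taking the $\ell^r$ norm in $j$, and inserting the commutator bound gives exactly the first (integral) form of the estimate. The exponential form then follows from Lemma \ref{osgood} applied with $h(\tau)=CV'(\tau)$, or equivalently by an integrating-factor argument. For the special case $f=v$, I would exploit the cancellation $[\Delta_j,v\,\partial_x]v=[\Delta_j,v]\partial_x v$ and the elementary commutator bound $\|[\Delta_j,v]\partial_x v\|_{L^p}\leq C c_j 2^{-js}\|\partial_x v\|_{L^\infty}\|v\|_{B^s_{p,r}}$, valid for every $s>0$, which bypasses Bony's decomposition entirely and gives $V'(t)=\|\partial_x v\|_{L^\infty}$.

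The main obstacle is the commutator estimate in the low-regularity and critical ranges, since the Bony remainder $R(v,\partial_x f)$ just fails to be continuous in the natural target space and must be handled by a different interpolation or by shifting regularity. The borderline cases $s=\frac{1}{p}$ and $s=\frac{1}{p}-1$ (with $r=\infty$) are where the argument is most delicate and where the choice of $V'(t)$ in the statement is essentially forced by the structure of the paraproduct/remainder split.
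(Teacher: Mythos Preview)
The paper does not prove this lemma; it is quoted verbatim from the cited references \cite{Chemin2011,Li2016nwpC} and stated without argument in the preliminaries section. Your outline---localize with $\Delta_j$, run an $L^p$ energy estimate along the flow, control the commutator $[\Delta_j,v\partial_x]f$ via Bony's paraproduct/remainder split, sum in $\ell^r$ and close with Gronwall---is exactly the standard proof given in those references, so there is nothing to compare against within this paper itself.
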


	\begin{lemm}\label{cont1}\cite{Chemin2011,Li2016nwpC}
		Let $y_0\in B^{\frac{1}{p}}_{p,1}$ with $1\leq p<\infty,$  and $f\in L^1([0,T];B^{\frac{1}{p}}_{p,1}).$ Define $\bar{\mathbb{N}}=\mathbb{N} \cup \{\infty\},$ for $n\in \bar{\mathbb{N}},$ and by $y_n \in C([0,T];B^{\frac{1}{p}}_{p,1})$ the solution of
		\begin{equation}
			\left\{\begin{aligned}
				&\partial_ty_n+A_n(u)\partial_xy_n=f,\quad x\in \mathbb{R},\\
				&y_n(t,x)|_{t=0}=y_0(x). \\
			\end{aligned} \right. \label{e1}
		\end{equation}
		Assume for some $\alpha(t)\in L^1(0,T),\  \sup\limits_{n\in \bar{\mathbb{N}}} \|A_n(u)\|_{B^{1+\frac{1}{p}}_{p,1}}\leq \alpha (t).$ If $A_n(u)$ converges in $A_{\infty}(u)$ in $L^1([0,T];B^{\frac{1}{p}}_{p,1}),$ then the sequence $(y_n)_{n\in \mathbb{N}}$ converges in $C([0,T];B^{\frac{1}{p}}_{p,1}).$
	\end{lemm}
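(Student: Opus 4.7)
The plan is a two-scale regularization argument. The chief obstruction at the critical index $s=1/p$ is that one cannot directly estimate the source $(A_\infty(u)-A_n(u))\partial_x y_\infty$ in $B^{1/p}_{p,1}$: the factor $\partial_x y_\infty$ only lies in $B^{1/p-1}_{p,1}$, and the Bony remainder fails precisely at this exponent (when $p>2$). I bypass this by smoothing both $y_0$ and $f$ via the low-frequency cutoff $S_j$, running the transport estimates on the regularized problem where one gains one derivative, and concluding by the triangle inequality.

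For $j\in\mathbb{N}$ set $y_0^j:=S_j y_0$ and $f^j:=S_j f$; by Bernstein these belong to $B^{1+1/p}_{p,1}$, and since $r=1<\infty$, Proposition~2.1(2) yields $y_0^j\to y_0$ in $B^{1/p}_{p,1}$ and $f^j\to f$ in $L^1([0,T];B^{1/p}_{p,1})$. For each $n\in\bar{\mathbb{N}}$ let $y_n^j$ be the solution of \eqref{e1} with data $(y_0^j,f^j)$. Lemma~\ref{existence} delivers $y_n^j\in C([0,T];B^{1+1/p}_{p,1})$ since $\partial_x A_n(u)\in L^1_T B^{1/p}_{p,1}$, and Lemma~\ref{priori estimate} applied at $s=1+1/p$ (so that $V'(t)\lesssim\|\partial_x A_n(u)\|_{B^{1/p}_{p,1}}\leq\alpha(t)$) yields
\begin{equation*}
\sup_{n\in\bar{\mathbb{N}}}\|y_n^j\|_{L^\infty_T B^{1+1/p}_{p,1}}\leq C_j,
\end{equation*}
where $C_j$ depends on $j$ through $\|S_j y_0\|_{B^{1+1/p}_{p,1}}$ and $\|S_j f\|_{L^1_T B^{1+1/p}_{p,1}}$ but is uniform in $n$.

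Now combine two difference estimates. Firstly, $y_n-y_n^j$ solves \eqref{e1} with coefficient $A_n(u)$, source $f-f^j$ and initial datum $y_0-y_0^j$, so Lemma~\ref{priori estimate} at $s=1/p$ gives
\begin{equation*}
\|y_n-y_n^j\|_{L^\infty_T B^{1/p}_{p,1}}\leq C\,e^{C\int_0^T\alpha(s)\,ds}\bigl(\|y_0-S_j y_0\|_{B^{1/p}_{p,1}}+\|f-S_j f\|_{L^1_T B^{1/p}_{p,1}}\bigr),
\end{equation*}
which tends to $0$ as $j\to\infty$ uniformly in $n\in\bar{\mathbb{N}}$. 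Secondly, for fixed $j$, the difference $y_n^j-y_\infty^j$ vanishes at $t=0$ and satisfies \eqref{e1} with coefficient $A_n(u)$ and source $(A_\infty(u)-A_n(u))\partial_x y_\infty^j$. The embedding $B^{1/p}_{p,1}\hookrightarrow L^\infty$ from Proposition~2.1(4) together with Lemma~\ref{product}(1) makes $B^{1/p}_{p,1}$ an algebra, hence
\begin{equation*}
\|(A_\infty(u)-A_n(u))\,\partial_x y_\infty^j\|_{B^{1/p}_{p,1}}\leq C\|A_\infty(u)-A_n(u)\|_{B^{1/p}_{p,1}}\|y_\infty^j\|_{B^{1+1/p}_{p,1}}\leq C_j\,\|A_\infty(u)-A_n(u)\|_{B^{1/p}_{p,1}},
\end{equation*}
and Lemma~\ref{priori estimate} then yields $\|y_n^j-y_\infty^j\|_{L^\infty_T B^{1/p}_{p,1}}\leq C_j\,e^{C\int_0^T\alpha}\|A_n(u)-A_\infty(u)\|_{L^1_T B^{1/p}_{p,1}}\to 0$ as $n\to\infty$.

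The triangle inequality $\|y_n-y_\infty\|\leq\|y_n-y_n^j\|+\|y_n^j-y_\infty^j\|+\|y_\infty^j-y_\infty\|$ closes the argument: given $\varepsilon>0$, the first estimate (applied with $n$ arbitrary and with $n=\infty$) lets me choose $j$ large so that the first and third terms are each below $\varepsilon/3$, and then the second estimate kills the middle term once $n$ is taken large. The hard part is the critical-index product bound circumvented above by the smoothing; everything else reduces to linear transport bookkeeping with the lemmas already in hand.
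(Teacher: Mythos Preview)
The paper does not prove this lemma; it is quoted without proof as a preliminary result from \cite{Chemin2011,Li2016nwpC}. Your $S_j$-regularization argument is correct and is essentially the proof appearing in those references: lifting the auxiliary solutions $y_n^j$ to $B^{1+1/p}_{p,1}$ turns the source term $(A_\infty-A_n)\,\partial_x y_\infty^j$ into a product of two $B^{1/p}_{p,1}$ functions (so the critical-index obstruction disappears), after which the triangle inequality in $j$ and $n$ closes.
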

	Let us consider the following initial value problem
	\begin{equation}\label{eq101}
		\left\{\begin{array}{l}
			y_t=u(t,y),t\in [0,T)  \\
			
			y(0,x)=x,x\in\mathbb{R}.
		\end{array}\right.
	\end{equation}
	\begin{lemm}\cite{Conper,Yinper}
		 Let u $\in C( [ 0,T);H^s)\cap  C^1([0,T);H^{s-1}),s\geq 2$ .Then\eqref{eq101} has a unique solution$ y \in C^1([0,T)\times \mathbb{R};\mathbb{R})$. Moreover, the map $y(t,\cdot)$ is an increasing diffeomorphism of$ \mathbb{R}$ with  
		$$y_x(t,x)=exp(\int_{0}^{t} u_x(s,q(s,x))ds)\textgreater 0,\forall(t,x)\in [0,T)\times \mathbb{R}$$ 
	\end{lemm}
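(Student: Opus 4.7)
The plan is to combine classical Cauchy--Lipschitz ODE theory with the Sobolev embedding $H^{s-1}\hookrightarrow L^\infty$, which holds since $s\ge 2>\tfrac12+1$. From the hypothesis $u\in C([0,T);H^s)\cap C^1([0,T);H^{s-1})$ I would first deduce that both $u$ and $u_x$ are continuous in $t$ with values in $L^\infty(\mathbb{R})$, and in particular $u(t,\cdot)$ is globally Lipschitz on $\mathbb{R}$ with Lipschitz constant $\|u_x(t,\cdot)\|_{L^\infty}$ that is locally integrable (even continuous) in $t$. Applying the standard Carathéodory/Cauchy--Lipschitz theorem to \eqref{eq101} for each fixed $x$ then yields a unique maximal solution $y(\cdot,x)\in C^1([0,T))$, and a Gronwall argument on $|y(t,x)|\le|x|+\int_0^t\|u(\tau)\|_{L^\infty}\,d\tau$ shows that the solution exists on all of $[0,T)$.

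Next I would address joint $C^1$ regularity in $(t,x)$. Writing the integral form $y(t,x)=x+\int_0^t u(\tau,y(\tau,x))\,d\tau$ and estimating $|y(t,x)-y(t,x')|$ via Gronwall gives that $y(t,\cdot)$ is Lipschitz, uniformly on compact time intervals, so $y$ is continuous on $[0,T)\times\mathbb{R}$. Formally differentiating in $x$, the candidate $w(t,x):=y_x(t,x)$ should satisfy the linear variational equation
\begin{equation*}
w_t(t,x)=u_x\bigl(t,y(t,x)\bigr)\,w(t,x),\qquad w(0,x)=1,
\end{equation*}
whose unique solution is the exponential formula stated in the lemma. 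To make this rigorous I would consider the difference quotient $(y(t,x+h)-y(t,x))/h$, show by another Gronwall argument (using that $u_x$ is continuous and bounded) that it is Cauchy in $C([0,T'])$ as $h\to0$ for any $T'<T$, and identify the limit with the explicit exponential; joint continuity of $y_x$ follows because the integrand $u_x(s,y(s,x))$ is continuous in $(s,x)$. Then $y_t=u(t,y)$ is continuous in $(t,x)$ as well, so $y\in C^1([0,T)\times\mathbb{R};\mathbb{R})$.

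Finally, the diffeomorphism property is immediate once the formula for $y_x$ is in hand: since $u_x(s,\cdot)\in L^\infty$ with $\int_0^t\|u_x(s)\|_{L^\infty}\,ds<\infty$, the exponential is strictly positive and locally bounded away from $0$, so $y(t,\cdot)$ is strictly increasing and a local $C^1$ diffeomorphism. Surjectivity follows from the bound $|y(t,x)-x|\le\int_0^t\|u(\tau)\|_{L^\infty}\,d\tau$, which forces $y(t,x)\to\pm\infty$ as $x\to\pm\infty$; combined with strict monotonicity and continuity this gives a $C^1$ bijection, and the positivity of $y_x$ ensures the inverse is $C^1$ as well by the inverse function theorem.

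The only real obstacle is the rigorous justification of differentiability in $x$, i.e.\ passing from the formal variational equation to the explicit exponential formula. Everything else is a routine application of Gronwall's inequality (Lemma \ref{osgood}) together with the Sobolev embedding $H^{s-1}\hookrightarrow L^\infty$, and no properties specific to the MOCH equation are needed.
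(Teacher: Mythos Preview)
Your argument is correct and is precisely the standard proof of this classical flow lemma. Note, however, that the paper does not supply its own proof of this statement: the lemma is quoted directly from the references \cite{Conper,Yinper} and is used as a black box. Your outline---Sobolev embedding $H^{s-1}\hookrightarrow C_b$ to get a time-continuous global Lipschitz bound on $u$, Cauchy--Lipschitz for existence/uniqueness, Gronwall for global existence, the variational equation $w_t=u_x(t,y)w$ for the $x$-derivative, and the resulting exponential formula to conclude the diffeomorphism property---is exactly the route taken in those references, so there is nothing to contrast.
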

	\begin{lemm}\cite{Luo1}
		Let $1\leq p,r\leq \infty,s\in \mathbb{R}$ and let (s,p,r) satisfy the condition $s\textgreater max(\frac{1}{2},\frac{1}{p})$ or $s=\frac{1}{p},1\leq p\leq 2,r=1.$ Assume that $ \gamma_0\in B^s_{p,r}$. Then there
		exists a time $T \textgreater 0$ such that $\eqref{eq1}$ has a unique solution $\gamma$ in $E^s_{p,r}(T)$. Moreover the solution depends continuously on the initial data.
	\end{lemm}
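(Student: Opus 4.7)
The plan is to carry out the classical Kato/Danchin four-step scheme for quasilinear transport equations, adapted to the two ranges of $(s,p,r)$ in the statement.

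First I would set up a Friedrichs/Picard iteration: put $\gamma^0\equiv 0$ and define $\gamma^{n+1}$ as the solution of the linear transport problem
$$\partial_t\gamma^{n+1}+G^{-1}m^n\,\partial_x\gamma^{n+1}=\tfrac12(\gamma^n)^2+\lambda G^{-1}m^n-\gamma^nG^{-1}m^n_x,\qquad\gamma^{n+1}|_{t=0}=S_{n+1}\gamma_0,$$
with $m^n=\gamma^n_x+(\gamma^n)^2/(2\lambda)$. Since $G=\partial_x^2-1$ is an $S^2$-multiplier, $G^{-1}$ gains two derivatives, so from $\gamma^n\in B^s_{p,r}$ and the product/algebra estimates of Lemma \ref{product} one gets $G^{-1}m^n\in B^{s+1}_{p,r}$ and $\nabla(G^{-1}m^n)\in B^s_{p,r}$. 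In both admissible regimes this matches the hypothesis on $\nabla v$ in Lemma \ref{priori estimate}, so the transport estimate is available at regularity $s$.

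Step one is uniform bounds. Applying Lemma \ref{priori estimate} at index $s$ together with Lemma \ref{product} to control the source term, a standard continuity/bootstrap argument produces $T=T(\|\gamma_0\|_{B^s_{p,r}})>0$ for which $\|\gamma^n\|_{L^\infty_T(B^s_{p,r})}\le 2\|\gamma_0\|_{B^s_{p,r}}$ uniformly in $n$. Step two is a Cauchy estimate for $\delta^{n}:=\gamma^{n+1}-\gamma^{n}$ one index below. The difference solves a transport equation whose velocity is $G^{-1}m^n$ and whose right-hand side is linear in $(\delta^{n-1},G^{-1}(m^n-m^{n-1}))$ with $B^s_{p,r}$ coefficients. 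Plugging this into Lemma \ref{priori estimate} at level $s-1$ (using the last refined case of that lemma together with Lemma \ref{product}(3) for the borderline case $s=\tfrac1p$, $p\le2$, $r=1$, where $s-1=\tfrac1p-1$) yields a geometric contraction, so $(\gamma^n)$ is Cauchy in $C([0,T];B^{s-1}_{p,r})$.

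Step three is passage to the limit. The Fatou property (Proposition 2.1(5)) gives a limit $\gamma\in L^\infty_T(B^s_{p,r})$; combining the $B^{s-1}_{p,r}$ Cauchy convergence with the logarithmic interpolation of Proposition \ref{prop}(2) upgrades the convergence to $B^{s'}_{p,r}$ for every $s'<s$, which is enough to pass to the limit in every term of \eqref{eq1}. Time continuity is then recovered by applying Lemma \ref{priori estimate} to the equation satisfied by $\gamma$ itself; in the critical case, the strong continuity $\gamma\in C([0,T];B^{1/p}_{p,1})$ is secured by Lemma \ref{cont1}. Step four: uniqueness follows by running the same $B^{s-1}_{p,r}$ difference estimate on two solutions and closing with the Gronwall inequality (Lemma \ref{osgood}); continuous dependence is then a direct consequence of Lemma \ref{cont1} applied to the sequence of initial data.

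The main obstacle is the critical endpoint $s=\tfrac1p$, $1\le p\le2$, $r=1$. There the natural "one-index-lower" space is $B^{1/p-1}_{p,1}$, which sits at the boundary of both the transport estimate and the standard product rule, so one must rely on the sharp cases Lemma \ref{product}(3) and the last clause of Lemma \ref{priori estimate}, and promote the a priori weak continuity in time to strong continuity using Lemma \ref{cont1}. Everything else is a routine, if lengthy, verification using the tools collected in Section 2.
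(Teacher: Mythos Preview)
The paper does not prove this lemma: it is quoted verbatim from \cite{Luo1} and carries no proof in the present text, so there is no ``paper's own proof'' to compare against. Your outline is the standard Kato--Danchin scheme and is precisely what \cite{Luo1} carries out; it also coincides with the iteration the authors set up in Step~1 of the proof of Theorem~\ref{the1} (equations \eqref{14}--\eqref{19}), where the same approximating sequence, the same bounds \eqref{2ineq1}--\eqref{2ineq2}, and the same bootstrap on $R_n$ appear.

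One small sharpening: in the borderline case $s=\tfrac1p$, $1\le p\le2$, $r=1$, the difference estimate cannot stay in $B^{1/p-1}_{p,1}$ because neither the last clause of Lemma~\ref{priori estimate} nor Lemma~\ref{product}(3) is stated there; both require $r=\infty$ at the index $\tfrac1p-1$. So Step~2 should be run in $B^{1/p-1}_{p,\infty}$, and the upgrade back to $B^{1/p}_{p,1}$ done via the logarithmic interpolation of Proposition~\ref{prop}(2). You clearly have the right tools in mind, but the space for the Cauchy/uniqueness estimate should be written as $B^{1/p-1}_{p,\infty}$, not $B^{s-1}_{p,r}$ with $r=1$.
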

   \begin{lemm}\label{2continuity}\cite{Li2016nwpC}
   	Suppose that ~$1\leq p\leq\infty,\ 1\leq r<\infty,\ s>\frac d p$\ (or \ $s=\frac d p,\ p<\infty,\ r=1)$. Let ~$\bar{\mathbb{N}}=\mathbb{N}\cup\{\infty\}$. Asuming that ~$(v^n)_{n\in\bar{\mathbb{N}}}\in C([0,T];B^{s+1}_{p,r})$, and ~$(f^n)_{n\in\bar{\mathbb{N}}} \in C([0,T];B^s_{p,r})$ solves the equation:
   	\begin{equation}
   		\left\{\begin{array}{l}
   			f^n_t+v^n\cdot\nabla f^n=g,\ x\in\mathbb{R}^d,\ t>0, \\
   			f^n(0,x)=f_0(x)
   		\end{array}\right.
   	\end{equation}
   	with ~$f_0\in B^s_{p,r},\ g\in L^1([0,T];B^s_{p,r})$, then there exists ~$\alpha\in L^1([0,T])$,  such that
   	$$\sup\limits_{n\in\bar{\mathbb{N}}}\|v^n(t)\|_{B^{s+1}_{p,r}}\leq \alpha(t).$$
   	If ~$v^n$ converges to ~$v^{\infty}$ in ~$L^1([0,T];B^s_{p,r})$ , then ~$f^n$ will converge to ~$f^{\infty}$ in ~$C([0,T];B^s_{p,r})$ .
   \end{lemm}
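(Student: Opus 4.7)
The plan is to reduce the problem to a higher-regularity setting via mollification of the data, and then conclude by a standard triangle inequality. I treat the uniform bound $\sup_{n\in\bar{\mathbb{N}}} \|v^n(t)\|_{B^{s+1}_{p,r}} \le \alpha(t)$ with $\alpha \in L^1([0,T])$ as a working hypothesis, since it is essential for every subsequent transport estimate and appears to be the intended reading.

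First, Lemma~\ref{priori estimate} applied to the transport equation for each $f^n$, together with the bound $\alpha$, gives a uniform bound on $(f^n)_{n\in\bar{\mathbb{N}}}$ in $L^\infty([0,T]; B^s_{p,r})$. For each integer $N\ge 1$, let $f^n_N \in C([0,T]; B^{s+1}_{p,r})$ denote the unique solution of the transport equation with the same velocity $v^n$ but regularised initial value $S_N f_0$ and source $S_N g$; Lemma~\ref{existence} yields solvability, while Lemma~\ref{priori estimate} applied at regularity $s+1$ (whose velocity hypothesis reduces to $\nabla v^n \in L^1([0,T]; B^s_{p,r})$, which is in hand) keeps $\|f^n_N\|_{L^\infty([0,T]; B^{s+1}_{p,r})}$ bounded uniformly in $n$ for each fixed $N$. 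The difference $z^n_N := f^n - f^n_N$ solves the same transport problem with data $f_0 - S_N f_0$ and source $g - S_N g$, so a further application of Lemma~\ref{priori estimate}, combined with $\|S_N u - u\|_{B^s_{p,r}} \to 0$ (valid since $r<\infty$), yields $\sup_{n\in\bar{\mathbb{N}}} \|z^n_N\|_{L^\infty([0,T]; B^s_{p,r})} \to 0$ as $N \to \infty$.

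The core step is to prove, for each fixed $N$, that $f^n_N \to f^\infty_N$ in $C([0,T]; B^s_{p,r})$ as $n \to \infty$. Setting $w^n_N = f^n_N - f^\infty_N$, one computes
\begin{equation*}
\partial_t w^n_N + v^n \partial_x w^n_N = (v^\infty - v^n)\,\partial_x f^\infty_N, \qquad w^n_N(0) = 0.
\end{equation*}
Since $f^\infty_N \in L^\infty([0,T]; B^{s+1}_{p,r})$, we have $\partial_x f^\infty_N \in L^\infty([0,T]; B^s_{p,r}) \hookrightarrow L^\infty([0,T]\times\mathbb{R})$ in both branches of the assumption on $(s,p,r)$. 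The algebra estimate in Lemma~\ref{product}(1), together with the embedding $B^s_{p,r}\hookrightarrow L^\infty$ applied to $v^\infty-v^n$, gives
\begin{equation*}
\|(v^\infty - v^n)\,\partial_x f^\infty_N\|_{B^s_{p,r}} \le C\,\|v^\infty - v^n\|_{B^s_{p,r}}\,\|f^\infty_N\|_{B^{s+1}_{p,r}},
\end{equation*}
so Lemma~\ref{priori estimate}, combined with $v^n \to v^\infty$ in $L^1([0,T]; B^s_{p,r})$, forces $\|w^n_N\|_{L^\infty([0,T]; B^s_{p,r})} \to 0$ for each fixed $N$. A triangle inequality $\|f^n - f^\infty\| \le \|f^n - f^n_N\| + \|f^n_N - f^\infty_N\| + \|f^\infty_N - f^\infty\|$ with an $\varepsilon/3$ argument (first fix $N$ large by the uniformity from the preceding paragraph, then let $n\to\infty$) completes the proof.

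The main obstacle lies precisely in the critical case $s = d/p$, $r = 1$: at this borderline regularity $\partial_x f^\infty$ lies only in $B^{s-1}_{p,r}$, so the product $(v^\infty - v^n)\,\partial_x f^\infty$ cannot be estimated directly in $B^s_{p,r}$, and the mollification detour through $f^\infty_N \in B^{s+1}_{p,r}$ is indispensable. A secondary delicate point is to verify that Lemma~\ref{priori estimate} at level $s+1$ is actually applicable to $(f^n_N)$: its velocity hypothesis is met because $\nabla v^n \in L^1([0,T]; B^s_{p,r})$ embeds in $L^1([0,T]; L^\infty)$ in each of the two admissible regimes for $(s,p,r)$, furnishing the Lipschitz-type control the transport estimate requires.
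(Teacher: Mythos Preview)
The paper does not prove this lemma at all: it is quoted from \cite{Li2016nwpC} in the Preliminaries section and used as a black box in Step~3 of the proof of Theorem~\ref{the1}. So there is no ``paper's own proof'' to compare against.

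That said, your argument is the standard one and is essentially the proof given in the cited reference. The mollification detour---replacing $(f_0,g)$ by $(S_N f_0, S_N g)$ to gain one extra derivative on the auxiliary solutions $f^n_N$, so that $(v^\infty - v^n)\nabla f^\infty_N$ can be estimated in $B^s_{p,r}$ via the algebra property---is exactly the mechanism that makes the critical endpoint $s=d/p$, $r=1$ work, and your identification of this as the main obstacle is correct. Your reading of the uniform bound $\sup_n \|v^n(t)\|_{B^{s+1}_{p,r}}\le \alpha(t)$ as a hypothesis rather than a conclusion is also the intended one; the lemma is awkwardly phrased in this respect. One minor point: the lemma is stated in $\mathbb{R}^d$ with $\nabla$, while you silently switch to $\partial_x$ midway; nothing changes in higher dimension, but it would be cleaner to keep the notation consistent.
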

\section{Local well-posedness}
In this section, we use Theorem 1.1 in \cite{2021}  to obtain the well-posedness of equation \eqref{eq1} in Besov spaces $B^{\frac 1 p}_{p,1}$ with $ 1\leq p <+\infty.$ Our main theorem can be stated as follows.
\begin{theo}\label{the1}
	Let $\gamma^0$ be in $B^{\frac{1}{p}}_{p,1}$ with $p\in [1,\infty)$ .
	Then there exists a time $T>0$ such that \eqref{eq1} has a unique solution $\gamma$ in $E^p_T\triangleq C\Big([0,T];B^{\frac 1 p}_{p,1}\Big)\cap C^1\Big([0,T];B^{\frac{1}{p}-1}_{p,1}\Big).$  Moreover,  the solution depends continuously on the initial data.
\end{theo}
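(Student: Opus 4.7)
The plan follows the three-stage structure foreshadowed in the introduction: existence by a compactness argument, uniqueness by a characteristic-line estimate on a transformed equation, and continuous dependence via the continuity Lemma~\ref{2continuity}.

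\textbf{Existence.} First I would mollify the initial datum, $\gamma^0_n := S_n\gamma^0$, producing smooth data in every $B^s_{p,1}$ that still converges to $\gamma^0$ in $B^{1/p}_{p,1}$. Invoking the Luo--Qiao--Yin local well-posedness result quoted above at, say, $s=1+1/p$, yields a smooth approximating sequence $\gamma^n$ on some interval $[0,T_n]$. The analytical core is a uniform-in-$n$ bound in $B^{1/p}_{p,1}$. Viewing \eqref{eq1} as a transport equation with velocity $v^n = G^{-1}m^n$, Lemma~\ref{priori estimate} in its critical variant ($s=1/p$, $p<\infty$, $r=1$) applies once $\nabla v^n \in L^1_T B^{1/p}_{p,1}$ is verified; this holds because $G^{-1}\partial_x$ is of order $-1$, which together with the algebra property of $B^{1/p}_{p,1}$ and Lemma~\ref{product} also controls the source terms $\gamma^2/2$, $\lambda G^{-1}m$ and $\gamma G^{-1}m_x$. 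A Gronwall argument (Lemma~\ref{osgood}) then yields a uniform lifespan $T>0$ and a uniform bound. Weak-$\star$ compactness combined with the Fatou property of Besov spaces extracts a limit $\gamma \in L^\infty([0,T];B^{1/p}_{p,1})$; time continuity and the $C^1_T B^{1/p-1}_{p,1}$ regularity are then read off from the equation itself.

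\textbf{Uniqueness.} This is the principal obstacle, because at the critical index $s=1/p$ the usual Bony-decomposition remainder estimates on the difference $\delta\gamma=\gamma_1-\gamma_2$ fail to close: for $p>2$ the product law of Lemma~\ref{product}(3) is not available. Following the hint in the introduction, I would introduce $n := (\partial_x-1)\gamma$ and derive a transport equation of the shape $n_t + (G^{-1}m)\,n_x = F(\gamma,n)$ in which the factorisation $G^{-1}=(\partial_x-1)^{-1}(\partial_x+1)^{-1}$ makes the source terms one derivative more regular than would appear from the raw form \eqref{eq1}. Because the advection velocity satisfies $G^{-1}m_1 \in L^\infty_T B^{1+1/p}_{p,1}\hookrightarrow L^\infty_T W^{1,\infty}$, the associated flow is well defined and bi-Lipschitz, so I can integrate the difference equation for $\delta n = n_1-n_2$ along characteristics. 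This bypasses the failed Bony remainder and yields $\delta n \equiv 0$ by a Gronwall estimate at regularity $B^{1/p-1}_{p,1}$; uniqueness of $\gamma$ then follows from the fact that $\partial_x - 1$ is an isomorphism between the relevant Besov spaces.

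\textbf{Continuous dependence.} Given $\gamma^0_j \to \gamma^0$ in $B^{1/p}_{p,1}$, the existence step furnishes a common lifespan and uniform bounds on the corresponding solutions $\gamma_j$ and on the velocities $v_j = G^{-1}m_j \in L^\infty_T B^{1+1/p}_{p,1}$. Applying Lemma~\ref{2continuity} to the transport equation satisfied by $n_j = (\partial_x-1)\gamma_j$ produces $n_j \to n_\infty$ in $C([0,T];B^{1/p-1}_{p,1})$, which transfers back to $\gamma_j \to \gamma$ in $C([0,T];B^{1/p}_{p,1})$ by the same invertibility of $\partial_x-1$ used in the uniqueness step. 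Convergence in $C^1([0,T];B^{1/p-1}_{p,1})$ is then immediate from the equation, completing the proof.
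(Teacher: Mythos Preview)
Your existence sketch is close in spirit to the paper's (they iterate linear transport equations rather than mollify and invoke the higher-regularity result, but both routes produce the same uniform $B^{1/p}_{p,1}$ bound via Lemma~\ref{priori estimate} and compactness). The real issue is in the uniqueness step, where your transformation is in the wrong direction.

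You set $n:=(\partial_x-1)\gamma$, which \emph{lowers} the regularity to $B^{1/p-1}_{p,1}$; for $p>2$ this space does not even embed into $L^p$, so there is no hope of closing a Lagrangian estimate for $\delta n$ at that level. The paper does the opposite: it defines
\[
n:=(\partial_x-1)^{-1}\gamma=(\partial_x+1)G^{-1}\gamma,
\]
so that $\gamma=(\partial_x-1)n$ and $\|n\|_{B^{1+1/p}_{p,1}}\lesssim\|\gamma\|_{B^{1/p}_{p,1}}$. This \emph{gains} one derivative and places $n$ in $W^{1,p}\cap W^{1,\infty}$. After passing to Lagrangian variables $N(t,\xi)=n(t,y(t,\xi))$ with $y_t=u(t,y)$, all the nonlocal terms become convolutions of the form $\int e^{-|y(\xi)-y(\eta)|}(\cdots)\,d\eta$, and the differences $\tilde Q_1-\tilde Q_2$ can be bounded in $L^p\cap L^\infty$ directly, with no Bony decomposition at all. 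A Gronwall argument then controls $\|N_1-N_2\|_{W^{1,p}\cap W^{1,\infty}}+\|y_1-y_2\|_{W^{1,p}\cap W^{1,\infty}}$, from which $n_1=n_2$ and hence $\gamma_1=\gamma_2$ follow. The factorisation $G^{-1}=(\partial_x-1)^{-1}(\partial_x+1)^{-1}$ you mention is indeed the key, but it is used to absorb the awkward $\gamma_x$ into the smoother unknown $n$, not to push the equation to lower regularity.

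Your continuous-dependence argument also needs adjustment: Lemma~\ref{2continuity} assumes a \emph{fixed} source $g$ and fixed initial datum $f_0$, whereas here both vary with the index. The paper first uses the Lagrangian uniqueness estimate plus interpolation to get $n_m\to n_\infty$ in $C_T B^{1/p}_{p,1}$, and then recovers convergence of $\partial_x n_m$ by splitting $\partial_x n_m=z_m+w_m$, where $z_m$ solves a linear transport equation with velocity $u_m$ but fixed source $\partial_x Q_\infty+\partial_x u_\infty\,\partial_x n_\infty$ and fixed data $\partial_x n_{0\infty}$ (so Lemma~\ref{cont1} applies), and $w_m$ absorbs the remainders and is handled by Gronwall.
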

\begin{proof}\
	Now  we prove Theorem \ref{the1} in three steps.
	
	\textbf{Step 1: Existence.}
	
	We firstly set ~$\gamma^0\triangleq0$ and define a sequences~$(\gamma^n)_{n\in\mathbb{N}}$
	of smooth functions by solving the following linear transport equations:
	\begin{equation}\label{14}
		\left\{\begin{array}{l}
			\gamma_t^{n+1}+G^{-1}m^{n}\gamma_x^{n+1}=\frac {(\gamma^{n})^2}{2}\ +\lambda G^{-1}m^{n}-\gamma G^{-1}m^{n}_x,  \\
			m^{n}=\gamma^{n}_x+\frac {(\gamma^{n})^2} {2\lambda}\ ,  \\
			\gamma^{n+1}(0,x)=S_{n+1}\gamma_0.
		\end{array}\right.
	\end{equation}
	we denote
	~$G^n=G^{-1}m^{n},\ F^n=\frac {(\gamma^{n})^2}{2}\ +\lambda G^{-1}m^{n}-\gamma G^{-1}m^{n}_x.$ We assume that $\gamma _n\in L^\infty([0,T];B^{\frac{1}{p}}_{p,1})$ for all $T>0,$ it follows that
	\begin{align}
		\|G_x^n\|_{B^{s}_{p,r}}&\leq C\|m^n\|_{B^{s-1}_{p,r}}
		\notag  \\
		&\leq C(\|\gamma^n\|_{B^{s}_{p,r}}+
		\|\gamma^n\|^2_{B^{s}_{p,r}}), \label{2ineq1}  \\
		\|F^n\|_{B^{s}_{p,r}}&\leq  C(\|\gamma^n\|^2_{B^s_{p,r}}+\|G^{-1}m^n\|_{B^s_{p,r}}+\|\gamma^n\|_{B^s_{p,r}}\|G^{-1}m_x^n\|_{B^s_{p,r}}) \notag  \\
		&\leq C(\|\gamma^n\|_{B^{s}_{p,r}}
		+\|\gamma^n\|^2_{B^{s}_{p,r}}+\|\gamma^n\|^3_{B^{s}_{p,r}}). \label{2ineq2}
	\end{align}
	Combining Lemma \ref{existence}  with \eqref{14},
	we deduce that there exists a global solution $\gamma _{n+1}\in E^{p}_T=C([0,T];{B^{\frac{1}{p}}_{p,1}})\cap
	C^1([0,T];{B^{\frac{1}{p}-1}_{p,1}}).$ Making use of  Lemma \ref{priori estimate} yields
	\ we define $R_n=\|\gamma^{n}(t)\|_{B^{s}_{p,r}}.$
	\begin{align}
		R_{n+1} &\leq e^{C\int_0^t \|G_x^n\|_{B^{s}_{p,r}}dt'}
		\Big(\|S_{n+1}\gamma_0\|_{B^{s}_{p,r}}+\int_0^t e^{-C\int_0^{t'} \|G_x^n\|_{B^{s}_{p,r}} dt''}
		\|F^n\|_{B^{s}_{p,r}}dt'\Big)  \notag\\
		&\leq Ce^{C\int_0^t R_n+R_n^2 dt'}\Big(\|\gamma_0\|_{B^{s}_{p,r}}+\int_0^t e^{-C\int_0^{t'} R_n+R_n^2 dt''} (R_n+R_n^2+R_n^3)dt'\Big).  \label{18}
	\end{align}
	For fixed  $T>0$ such that $ 4C^3 T\|\gamma_0\|^2_{B^{s}_{p,r}} <1 $ and suppose that
	\begin{equation}\label{19}
		\forall t\in [0,T],\ R_n\leq
		\frac{C\|\gamma_0\|_{B^{s}_{p,r}}}{2(1-4C^3 t\|\gamma_0\|^2_{B^{s}_{p,r}})^{\frac 12}}.
	\end{equation}
	Plugging \eqref{19} into \eqref{18}, we have
	\begin{align*}
		R_{n+1} &\leq
		C\|\gamma_0\|_{B^{s}_{p,r}}(1-4C^3 t\|\gamma_0\|^2_{B^{s}_{p,r}})^{-\frac 14}
		\Big(1+C^3\|\gamma_0\|^2_{B^{s}_{p,r}} \int_0^t (1-4C^3 t'\|\gamma_0\|^2_{B^{s}_{p,r}})^{-\frac 54} dt'\Big)  \\
		&\leq \frac{C\|\gamma_0\|_{B^{s}_{p,r}}}{2(1-4C^3 t\|\gamma_0\|^2_{B^{s}_{p,r}})^{\frac 12}}.
	\end{align*}
	Therefore, the sequences  $(\gamma_n)_{n\in \mathbb{N}}$ is uniformly  bounded in $L^{\infty}([0,T];{B^{\frac{1}{p}}_{p,1}})$.
	
	Making use of  the compactness method for the approximating sequence $(\gamma_n)_{n\in\mathbb{N}}$ to get a solution $\gamma$ of \eqref{eq1}.  Owing to the uniformly boundedness of $\gamma_n$ in $L^{\infty}([0,T];B^{\frac{1}{p}}_{p,1}),$ we get  $\partial_t\gamma_n$ is uniformly bounded in $L^{\infty}([0,T];B^{\frac{1}{p}-1}_{p,1}).$ Therefore,
	$\gamma_n$ is~uniformly~bounded~in~  $C([0,T];B^{\frac{1}{p}}_{p,1})\cap C^{\frac 1 2}([0,T];B^{\frac{1}{p}-1}_{p,1}).$
	We give a sequence  $(\phi_j)_{j\in\mathbb{N}}$  of smooth functions with values in $[0,1],$ supported in $B(0,j+1)$ and equals to   $1$ on $B(0,j).$ Now, taking advantage of  Theorem 2.94 in  \cite{Chemin2011}, it is easy to check that the map $\gamma_n\mapsto \phi_j \gamma_n$ is compact from $B^{\frac{1}{p}}_{p,1}$ to $B^{\frac{1}{p}-1}_{p,1}$. Hence, by using Ascoli's theorem, we can see there exists some function $\gamma_j$ such that, up to extraction, the sequence $(\phi_j \gamma_n)_{j\in\mathbb{N}}$ converges to  $\gamma_j.$  On the  other hand,
	using the Cantor diagonal process, we deduce that there exists a subsequence  of $(\gamma_j)_{j\in\mathbb{N}}$ ( still mark it as $(\gamma_j)_{j\in\mathbb{N}}$ ) such that for any $j\in \mathbb{N},$ $\phi_j \gamma_n$  converges to $\gamma_j$ in $C([0,T]; B^{\frac 1 p -1}_{p,1}).$ Note that $\phi_j\phi_{j+1}=\phi_j,$  we thus have  $\gamma_j=\phi_j \gamma_{j+1}.$ Consequently, for  any $\phi\in\mathcal{D},$ we deduce that there exists some function $\gamma$  such that  the sequence  $(\phi \gamma_n)_{n\geq 1}$   tends (up to a subsequence
	independent of $\phi$) to $                                                                                                                                                                                                                                                                                                                                                                                                                                                                                                                                                                                                                                                                                                                                                                                                                                                                                                                                                                                                                                                                                                                                                                                                                                                                                                                                                                                                                                                                                                                                                                                                                                                                                                                                                                                                                                                                                                                                                                                                                                                                                                                                                                                                                                                                                                                                                                                                                                                                                                                                                                                                                                                                                                                                                                                                                                                                                                                                                                                                                                                                                                                                                                                                                                                                                                                                                                                                                                                                                                                                                                                                                                                                                                                                                                                                                                                                                                                                                                                                                                                                                                                                                                                                                                                                                                                                                                                                                                                                                                                                                                                                                                                                                                                                                                                                                                                                                                                                                                                                                                                                                                                                                                                                                                                                                                                                                                                                                                                                                                                                                                                                                                                                                                                                                                                                                                                                                                                                                                                                                                                                                                                                                                                                                                                                                                                                                                                                                                                                                                                                                                                                                                                                                                                                                                                                                                                                                                                                                                                                                                                                                                                                                                                                                                                                                                                                                                                                                                                                                                                                                                                                                                                                                                                                                                                                                                                                                                                                                                                                                                                                                                                                                                                                                                                                                                                                                                                                                                                                                                                                                                                                                                                                                                                                                                                                                                                                                                                                                                                                                                                                                                                                                                                                                                                                                                                                                                                                                                                                                                                                                                                                                                                                                                                                                                                                                                                                                                                                                                                                                                                                                                                                                                                                                                                                                                                                                                                                                                                                                                                                                                                                                                                                                                                                                                                                                                                                                                                                                                                                                                                                                                                                                                                                                                                                                                                                                                                                                                                                                                                                                                                                                                                                                                                                                                                                                                                                                                                                                                                                                                                                                                                                                                                                                                                                                                                                                                                                                                                                                                                                                                                                                                                                                                                                                                                                                                                                                                                                                                                                                                                                                                                                                                                                                                                                                                                                                                                                                                                                                                                                                                                                                                                                                                                                                                                                                                                                                                                                                                                                                                                                                                                                                                                                                                                                                                                                                                                                                                                                                                                                                                                                                                                                                                                                                                                                                                                                                                                                                                                                                                                                                                                                                                                                                                                                                                                                                                                                                                                                                                                                                                                                                                                                                                                                                                                                                                                                                                                                                                                                                                                                                                                                                                                                                                                                                                                                                                                                                                                                                                                                                                                                                                                                                                                                                                                                                                                                                                                                                                                                                                                                                                                                                                                                                                                                                                                                                                                                                                                                                                                                                                                                                                                                                                                                                                                                                                                                                                                                                                                                                                                                                                                                                                                                                                                                                                                                                                                                                                                                                                                                                                                                                                                                                                                                                                                                                                                                                                                                                                                                                                                                                                                                                                                                                                                                                                                                                                                                                                                                                                                                                                                                                                                                                                                                                                                                                                                                                                                                                                                                                                                                                                                                                                                                                                                                                                                                                                                                                                                                                                                                                                                                                                                                                                                                                                                                                                                                                                                                                                                                                                                                                                                                                                                                                                                                                                                                                                                                                                                                                                                                                                                                                                                                                                                                                                                                                                                                                                                                                                                                                                                                                                                                                                                                                                                                                                                                                                                                                                                                                                                                                                                                                                                                                                                                                                                                                                                                                                                                                                                                                                                                                                                                                                                                                                                                                                                                                                                                                                                                                                                                                                                                                                                                                                                                                                                                                                                                                                                                                                                                                                                                                                                                                                                                                                                                                                                                                                                                                \phi \gamma$ in $C([0,T];B^{\frac{1}{p}-1}_{p,1}).$   In addition,
	using  the uniform boundness of $(\gamma^n)_{n\geq 1}$ and the Fatou property, we can find that $\gamma$ is bounded in $L^{\infty}([0,T];B^{\frac{1}{p}}_{p,1}).$ For any $\varepsilon>0,$  we see that  $\phi \gamma_n$ tends to $\gamma$ in $C([0,T];B^{\frac{1}{p}-\varepsilon}_{p,1})$ by applying interpolation.  Since $(1-\partial_x^2)^{-1}\partial_x$ is a good operator, which is a routine process to prove that  $u$  is the solution of $\eqref{eq1}.$ Finally, using the equation again, we see that $\partial_t\gamma \in C([0,T];B^{\frac{1}{p}-1}_{p,1}).$
	
	\textbf{Step 2: Uniqueness.}
	
	In this step, we prove the uniqueness of the solution in the Lagrangian coordinate.
	Now, let us consider a transformation~$n=(\partial_x+1)G^{-1}\gamma=(\partial_x-1)^{-1}\gamma$.Then we have  $\gamma=(\partial_x-1)n$,and therefore,equation \eqref{eq1} is changed to 
	\begin{align}\label{20}
		n_t+un_x &=nu-G^{-1}(un_x-un)-\partial_xG^{-1}(un_x-un)+\frac 1 2 (-n^2+G^{-1}n^2_{x}+\partial_xG^{-1}n^2_{x})
		\notag  \\
		&+\lambda\partial_xG^{-1}n+\frac 1 2 ((\partial_x-1)^{-1}G^{-1}n^2_{x}-G^{-1}n^2),
	\end{align}
	where~ $u=n-\partial_xG^{-1}n+G^{-1}n+\frac {1} {2\lambda}(G^{-1}n^2_{x}+G^{-1}n^2-\partial_xG^{-1}n^2).$
	For the initial data ,we have 
	\begin{align}\label{3ini}
		n(0,x)=\bar{n}(x)=(\partial_x-1)^{-1}\bar{\gamma}(x).
	\end{align}
	and for simplicity, we denote ~$Q=nu-G^{-1}(un_x-un)-\partial_xG^{-1}(un_x-un)+\frac 1 2 (-n^2+G^{-1}n^2_{x}+\partial_xG^{-1}n^2_{x})
	+\lambda\partial_xG^{-1}n+\frac 1 2 ((\partial_x-1)^{-1}G^{-1}n^2_{x}-G^{-1}n^2)$
	Let $n$ be the smooth solution.  Introducing a new variable $\xi\in \mathbb{R},$ we define the characteristic $y(t,\xi) $ as
	\begin{equation}
		\left\{\begin{aligned}
			&y_t(t,\xi)=u(t,y(t,\xi)),\quad x\in \mathbb{R},\\
			&y(t,\xi)|_{t=0}=\bar{y}(\xi). \\
		\end{aligned} \right. \label{02}
	\end{equation}
	Define $N(t,\xi)=n(t,y(t,\xi)),U(t,\xi)=u(t,y(t,\xi)),$ then $N_{\xi}(t,\xi)=n_x(t,y(t,\xi))y_{\xi}(t,\xi).$ Moreover, the function  $U(t,\xi)$  is a solution of
	\begin{align}\label{05}
		N_t(t,\xi)=n_t(t,y(t,\xi))+y_t(t,\xi)n_x(t,y(t,\xi))=\tilde{Q}(t,\xi),
	\end{align}
	where
	\begin{align} \label{04}
		\tilde{Q}(t,\xi)&\triangleq NU(t,\xi)-G^{-1}{(UN_x-UN)}(t,\xi)-\partial_xG^{-1}{(UN_x-UN)}(t,\xi)
		\notag \\
		&~~~+\frac{1}{2}{(-N^2+G^{-1}{N_x}^2+\partial_xG^{-1}{N_x}^2)}+\lambda\partial_xG^{-1}{N}+\frac{1}{2}((\partial_x-1)^{-1}G^{-1}{N_x}^2-G^{-1}N^2)
		\notag \\
		&=NU(t,\xi)+\int_{\mathbb{R}}e^{-|y(t,\xi)-x|}(u(t,x)n_x(t,x)-u(t,x)n(t,x))dx
		\notag \\
		&~~~-\int_{\mathbb{R}}{\rm sgn}(y(t,\xi)-x)e^{-|y(t,\xi)-x|}(u(t,x)n_x(t,x)-u(t,x)n(t,x))dx
		\notag \\
		&~~~+\lambda\int_{\mathbb{R}}{\rm sgn}(y(t,\xi)-x)e^{-|y(t,\xi)-x|}{n_x}^2(t,x)dx-\frac{1}{2}\big(-N^2-\int_{\mathbb{R}}e^{-|y(t,\xi)-x|}{n_x}^2(t,x)dx
		\notag \\
		&~~~+\int_{\mathbb{R}}{\rm sgn}(y(t,\xi)-x)e^{-|y(t,\xi)-x|}{n_x}^2(t,x)dx \big)
		-\frac{1}{2}\int_{\mathbb{R}}e^{-|y(t,\xi) -x|}{n}^2(t,x)dx
		\notag \\
		&~~~+\frac{1}{2}(\partial_x+1)\int_{\mathbb{R}}\int_{\mathbb{R}}e^{-|y(t,\xi)-z|}e^{-|z-x|}{n_x}^2(t,x)dx
	\end{align}
	We perform the change of variables $x=y(t,\eta).$$z=y(t,\zeta).$ Hence, \eqref{04} may be written   as
	\begin{align}
		&\tilde{Q}(t,\xi)\triangleq Q(t,y(t,\xi))=NU(t,\xi)+\int_{\mathbb{R}}e^{-|y(\xi,t)-y(t,\eta)|}(U(t,\eta)N_x(t,\eta)\frac{1}{y_\eta}-U(t,\eta)N(t,\eta))y_\eta d\eta
		\notag \\
		&-\int_{\mathbb{R}}{\rm sgn}(y(t,\xi)-y(t,\eta))e^{-|y(\xi)-y(t,\eta)|}(U(t,\eta)N_x(t,\eta)\frac{1}{y_\eta}-U(t,\eta)N(t,\eta))y_\eta d\eta
		\notag \\
		&+\lambda\int_{\mathbb{R}}{\rm sgn}(y(t,\xi)-y(t,\eta))e^{-|y(\xi)-y(t,\eta)|}{N_x}^2(t,\eta)\frac{1}{y_\eta}d\eta
		\notag \\
		&-\frac{1}{2}(-N^2-\int_{\mathbb{R}}e^{-|y(\xi)-y_\eta|}{N_x}^2(t,\eta)\frac{1}{y_\eta}d\eta+\int_{\mathbb{R}}{\rm sgn}(y(t,\xi)-y(t,\eta))e^{-|y(\xi)-y(t,\eta)|}{N_x}^2(t,\eta)\frac{1}{y_\eta}d\eta)
		\notag \\
		&-\frac{1}{2}\int_{\mathbb{R}}e^{-|y(\xi)-y(t,\eta)|}{N}^2(t,\eta)y_\eta d\eta
		+\frac{1}{2}(\partial_x+1)\int_{\mathbb{R}}\int_{\mathbb{R}}e^{-|y(\xi)-y(t,\zeta)|}e^{-|y(t,\zeta)-y(t,\eta)|}{N_x}^2(t,\eta)\frac{y_\zeta}{y_\eta}d\zeta d\eta\label{09}  .
	\end{align}
	Taking the derivative of the equation \eqref{04}  with respect to variable $\xi,$ we have
	\begin{align}\label{07}
		N_{t\xi}=\tilde{Q}_{\xi}(t,\xi)=&N_x y_\xi U+NU_x y_\xi-\partial_xG^{-1}(UN_x-UN y_\xi)-(UN_x-UN y_\xi)-G^{-1}(UN_x-UN) y_\xi
		\notag \\
		&+\frac{1}{2}(-2NN_x y_\xi+\partial_xG^{-1} {N_x}^2 y_\xi+{N_x}^2 \frac{1}{y_\xi})+\lambda Ny_\xi+\lambda G^{-1}N y_\xi
		\notag \\
		&+\frac{1}{2}(G^{-1}{N_x}^2 y_\xi+(\partial_x -1)^{-1}G^{-1}{N_x}^2 y_\xi -\partial_xG^{-1}N^2 y_\xi).
	\end{align}
	
	Similarly, we  deduce that
	\begin{align}\label{17}
		y_{t\xi}(t,\xi)=U_{\xi}(t,\xi).  
	\end{align}
	Notice that the solutions of \eqref{04} and \eqref{17} can be written as
	\begin{align}
		&y(t,\xi)=\xi-\int_0^tU(\tau,\xi)d\tau,\label{b0}\\
		&y_{\xi}(t,\xi)=1-\int_0^tU_{\xi}(\tau,\xi)d\tau.\label{b1}
	\end{align}
	
	On the one hand, since $\|n\|_{B^{1+\frac{1}{p}}_{p,1}}\leq C\|(\partial_x -1)n\|_{B^{\frac{1}{p}}_{p,1}}\leq C\|\gamma\|_{B^{\frac{1}{p}}_{p,1}}$ using the fact that $\gamma$ is uniformly bounded in $C_T(B^{1+\frac{1}{p}}_{p,1})$  and the embedding $B^{1+\frac{1}{p}}_{p,1}\hookrightarrow W^{1,p}\cap W^{ 1,\infty},$ we easily deduce that  $n$ is uniformly bounded in $  C_T(W^{1,p}\cap W^{ 1,\infty} ).$ And it suffices  to prove that $y_{\xi}$ is uniformly bounded in $L^{\infty}_T(L^{\infty})$.
	On the other hand, we see that $\frac{1}{2}\leq y_{\xi}\leq C_{n_0}$  for  sufficiently small $T>0$.  With no loss of generality, let $t$ be sufficiently small,  we use the continuous method otherwise. Combining 
	\eqref{04} and the boundedness of $n$ in  $  C_T(W^{1,p}\cap W^{1,\infty} ),$ we have
	\begin{align*}
		&\|N(t,\xi)\|^p_{L^p}=\int_{\mathbb{R}} |N(t,\xi)|^pd\xi=\int_{\mathbb{R}} |n(t,y(t,\xi))|^p\frac{1}{y_{\xi}}dy\leq \|n\|^p_{L^p}\|\frac{1}{y_{\xi}}\|_{L^{\infty}}\leq 2\|n\|^p_{L^p}\leq C;\\ 
		&\|N_{\xi}(t,\xi)\|^p_{L^p}=\int_{\mathbb{R}} |N_{\xi}(t,\xi)|^p|y_{\xi}|^{p-1}d\xi=\int_{\mathbb{R}} |n_x(t,y(t,\xi))|^p|y_{\xi}|^{p-1}dy\leq \|n_x\|^p_{L^p}\|{y_{\xi}}\|^{p-1}_{L^{\infty}}\leq C^{p-1}_{u_0}\|n_x\|^p_{L^p}\leq C;\\
	\end{align*}
	From the above inequalities, we get $N(t,\xi) \in L^{\infty}_T(W^{1,p}\cap W^{1,\infty}), y(t,\xi)-\xi\in L^{\infty}_T(W^{1,p}\cap W^{1,\infty})$ and $\frac{1}{2}\leq y_{\xi}\leq C_{u_0}$ for satisfies \eqref{05} for any $t\in [0,T].$

	Now, we consider the uniqueness. Let  $n_1$ and $n_2$ be two solutions  in $E^p_T$ of \eqref{eq1} with the same data, we see that for $i=1,2$,  the function $N_i(t,\xi)=n(t,y_i(t,\xi))$ is a solution of
	\begin{align}
		&N_{it}(t,\xi)=n_{it}(t,y_i)+y_{it}(t,\xi)n_{ix}(t,y_i(t,\xi))=\tilde{Q}_i(t,\xi).\label{21}
	\end{align}
	Obviously, for sufficiently small $T>0,$ we have $N_i(t,y_i(t,\xi))\in L^{\infty}_T(W^{1,p}\cap W^{1,\infty}), y_i(t,\xi)-\xi\in L^{\infty}_T(W^{1,p}\cap W^{1,\infty})$ and $\frac{1}{2}\leq y_{i\xi}\leq C_{n_0}.$
	
	In order to get an estimate for $\|N_1(t,\xi)-N_2(t,\xi)\|_{W^{1,p}\cap W^{1,\infty}},$ we shall consider the estimate  for  $\|\tilde{Q}_{1}(t,\xi)-\tilde{Q}_{2 }(t,\xi)\|_{W^{1,p}\cap W^{1,\infty}}.$ For $\tilde{Q}_{1}(t,\xi)$ and $\tilde{Q}_{2 }(t,\xi),$ we have
	\begin{align}\label{q1}
		\tilde{Q}_{1}(t,\xi)-\tilde{Q}_{2}(t,\xi)&=N_1U_1(t,\xi)+\int_{\mathbb{R}}e^{-|y_1(\xi,t)-y_1(t,\eta)|}(U_1(t,\eta)N_{1x}(t,\eta)\frac{1}{y_\eta}-U_1(t,\eta)N_1(t,\eta))y_{1\eta} d\eta
		\notag \\
		&-\int_{\mathbb{R}}{\rm sgn}(y_1(t,\xi)-y_1(t,\eta))e^{-|y_1(\xi)-y_1(t,\eta)|}(U_1(t,\eta)N_{1x}(t,\eta)\frac{1}{y_{1\eta}}-U_1(t,\eta)N_1(t,\eta))y_{1\eta} d\eta
		\notag \\
		&+\lambda\int_{\mathbb{R}}{\rm sgn}(y_1(t,\xi)-y_1(t,\eta))e^{-|y_1(\xi)-y_1(t,\eta)|}{N_{1x}}^2(t,\eta)\frac{1}{y_{1\eta}}d\eta
		\notag \\
		&-\frac{1}{2}(-N_1^2-\int_{\mathbb{R}}e^{-|y_1(\xi)-\frac{1}{y_{1\eta}}|}{N_{1x}}^2(t,\eta)\frac{1}{y_\eta}d\eta+\int_{\mathbb{R}}e^{-|y_1(\xi)-y_1(t,\eta)|}{N_1}^2(t,\eta)y_{1\eta} d\eta
		\notag \\
		&+\int_{\mathbb{R}}{\rm sgn}(y_1(t,\xi)-y_1(t,\eta))e^{-|y_1(\xi)-y_1(t,\eta)|}{N_{1x}}^2(t,\eta)\frac{1}{y_{1\eta}}d\eta)
		\notag \\
		&+\frac{1}{2}(\partial_x+1)\int_{\mathbb{R}}\int_{\mathbb{R}}e^{-|y_1(\xi)-y_1(t,\zeta)|}e^{-|y_1(t,\zeta)-y_1(t,\eta)|}{N_{1x}}^2(t,\eta)\frac{y_{1\zeta}}{y_{1\eta}}d\zeta d\eta
		\notag \\
		&-N_2U_2(t,\xi)-\int_{\mathbb{R}}e^{-|y_2(\xi,t)-y_2(t,\eta)|}(U_2(t,\eta)N_{2x}(t,\eta)\frac{1}{y_{2\eta}}-U_2(t,\eta)N_2(t,\eta))y_{2\eta} d\eta
		\notag \\
		&+\int_{\mathbb{R}}{\rm sgn}(y_2(t,\xi)-y_2(t,\eta))e^{-|y_2(\xi)-y_2(t,\eta)|}(U_2(t,\eta)N_{2x}(t,\eta)\frac{1}{y_{2\eta}}-U_2(t,\eta)N_2(t,\eta))y_{2\eta} d\eta
		\notag \\
		&-\lambda\int_{\mathbb{R}}{\rm sgn}(y_2(t,\xi)-y_2(t,\eta))e^{-|y_2(\xi)-y_2(t,\eta)|}{N_{2x}}^2(t,\eta)\frac{1}{y_{2\eta}}d\eta
		\notag \\
		&+\frac{1}{2}(-N_2^2-\int_{\mathbb{R}}e^{-|y_2(\xi)-\frac{1}{y_{2\eta}}|}{N_{2x}}^2(t,\eta)\frac{1}{y_{2\eta}}d\eta+\int_{\mathbb{R}}e^{-|y_2(\xi)-y_2(t,\eta)|}{N_2}^2(t,\eta)y_{2\eta} d\eta
		\notag \\
		&+\int_{\mathbb{R}}{\rm sgn}(y_2(t,\xi)-y_2(t,\eta))e^{-|y_2(\xi)-y_2(t,\eta)|}{N_{2x}}^2(t,\eta)\frac{1}{y_{2\eta}}d\eta)
		\notag \\
		&-\frac{1}{2}(\partial_x+1)\int_{\mathbb{R}}\int_{\mathbb{R}}e^{-|y_2(\xi)-y_2(t,\zeta)|}e^{-|y_2(t,\zeta)-y_2(t,\eta)|}{N_{2x}}^2(t,\eta)\frac{y_{2\zeta}}{y_{2\eta}}d\zeta d\eta
	\end{align}

	Firstly, let us focaus on the term $G^{-1}(U_1N_{1x}-U_2N_{2x})$ which also can be written as
	
	$G^{-1}(N_1N_{1x}-N_2N_{2x})-N_1\partial_xG^{-1}N_1+N_2\partial_xN_2+N_1G^{-1}N_1-N_2G^{-1}N_2+\frac{1}{2\lambda}(N_1G^{-1}{N_{1x}}^2-N_2G^{-1}{N_{2x}}^2+N_1G^{-1}{N_1}^2-N_2G^{-1}{N_2}^2-N_1\partial_xG^{-1}{N_1}^2+N_2\partial_xG^{-1}{N_2}^2)$
	
	Then an estimate for $G^{-1}(N_1N_{1x}-N_2N_{2x})$ as the form of
	\begin{align}\label{q100}
		G^{-1}(N_1N_{1x}-N_2N_{2x})=&\int_{\mathbb{R}} sgn(y_1(\xi)-y_1(\eta))e^{-|y_1(\xi)-y_2(\eta)|}(N_1)^2y_{1\eta}d\eta
		\notag \\
		&-\int_{\mathbb{R}}sgn(y_2(\xi)-y_2(\eta))e^{-|y_2(\xi)-y_2(\eta)|}(N_2)^2y_{2\eta}d\eta,
	\end{align}
should be taken into consideration.

	Owing that  $y_{i}(t,\xi)(i=1,2)$ is
	monotonically increasing, then $sgn(y_i(\xi)-y_i(\eta))=sgn(\xi-\eta),$  which implies \eqref{q100} can be rewritten as
	\begin{align}\label{q2}
		G^{-1}(N_1N_{1x}-N_2N_{2x})=&\int_{\mathbb{R}} sgn(\xi-\eta)[e^{-|y_1(\xi)-y_1(\eta)|}N_1^2y_{1\eta}-e^{-|y_2(\xi)-y_2(\eta)|}N_2^2y_{2\eta}]
		\notag\\
		&=\int_{\mathbb{R}} sgn(\xi-\eta)[e^{-|y_1(\xi)-y_1(\eta)|}N_1^2y_{1\eta}-e^{-|y_2(\xi)-y_2(\eta)|}N_1^2y_{2\eta}
		\notag \\
		&+e^{-|y_2(\xi)-y_2(\eta)|}N_1^2y_{1\eta}-e^{-|y_2(\xi)-y_2(\eta)|}N_2^2y_{1\eta}] d\eta
		\notag \\
		&=\int_{\mathbb{R}}sgn(\xi-\eta)(e^{-|y_1(\xi)-y_1(\eta)|}-e^{-|y_2(\xi)-y_2(\eta)|})N_1^2y_{1\eta}
		\notag \\
		&+sgn(\xi-\eta)e^{-|y_2(\xi)-y_2(\eta)|}(N_1^2y_{1\eta}-N_2^2y_{2\eta})d\eta
		\notag \\
		&=I_1+I_2.
	\end{align}
	
	If $\xi>\eta(or \ \xi<\eta),$ then $y_i(\xi)>y_i(\eta)~(or~y_i(\xi)<y_i(\eta)).$ Thus, we have
	\begin{align}\label{23}
		I_1=&-\int_{\xi}^{\infty}(e^{-|y_1(\xi)-y_1(\eta)|}-e^{-|y_2(\xi)-y_2(\eta)|})N_1^2y_{1\eta}d\eta 
		\notag\\
		&~+\int_{-\infty}^{\xi}((e^{-|y_1(\xi)-y_1(\eta)|}-e^{-|y_2(\xi)-y_2(\eta)|})N_1^2y_{1\eta}d\eta  
		\notag\\
		=&-\int_{\xi}^{\infty}(e^{\int_{0}^{t} U_1(\xi)-U_1(\eta)d\tau}-
		e^{\int_{0}^{t} U_2(\xi)-U_2(\eta)d\tau})N_1^2 y_{1\eta}d\eta
		\notag\\
		&+\int_{\infty}^{\xi}(e^{\int_{0}^{t} U_1(\xi)-U_1(\eta)d\tau}-
		e^{\int_{0}^{t} U_2(\xi)-U_2(\eta)d\tau})N_1^2 y_{1\eta}d\eta
		\notag \\
		\leq&C\|U_1-U_2\|_{L^{\infty}} (1_{\geq 0}e^{-|\cdot|}\ast \frac{1}{2}\partial_x(N_1)^2+
		1_{\leq 0}e^{-|\cdot|}\ast \frac{1}{2}\partial_x(N_1)^2)
	\end{align}
	Likewise, we get
	\begin{align}\label{1000}
		I_2=&\int_{\mathbb{R}}e^{-|y_2(\xi)-y_2(\eta)|}(N_1N_{1x}-N_2N_{2x})d\eta
		\notag \\
		&=\int_{-\infty}^{\xi}e^{y_2(\eta)-y_2(\xi)}(N_1N_{1x}-N_2N_{2x})d\eta
		\notag \\
		&~~~+\int_{\xi}^{+\infty}e^{y_2(\xi)-y_2(\eta)}(N_1N_{1x}-N_2N_{2x})d\eta
		\notag \\
		&\leq C(1_{\geq 0}e^{-|\cdot|}\ast\frac{1}{2}(N_1^2-N_2^2)_x
		+1_{\leq 0}e^{-|\cdot|}\ast\frac{1}{2}(N_1^2-N_2^2)_x)
	\end{align}
	Combining \eqref{23} and \eqref{1000},then
	\begin{align}
		&\|G^{-1}(N_1N_{1x}-N_2N_{2x})\|_{L^{\infty}}\leq C(\|U_1-U_2\|_{L^{\infty}} \|(1_{\geq 0}e^{-|\cdot|}\ast \frac{1}{2}\partial_x(N_1)^2+
		1_{\leq 0}e^{-|\cdot|}\ast \frac{1}{2}\partial_x(N_1)^2)\|_{L^{\infty}}+
		\notag\\
		&\|1_{\geq 0}e^{-|\cdot|}\ast\frac{1}{2}(N_1^2-N_2^2)_x
		+1_{\leq 0}e^{-|\cdot|}\ast\frac{1}{2}(N_1^2-N_2^2)_x\|_{L^{\infty}})
		\notag\\
		&\leq C(\|U_1-U_2\|_{L^{\infty}} \frac{1}{2}\|(1_{\geq 0}e^{-|\cdot|}\|_{L^1} \|\partial_x(N_1)\|_{L^{\infty}}
		\|N_1\|_{L^{\infty}}+
		\|1_{\leq 0}e^{-|\cdot|}\|_{L^1} \frac{1}{2}\|\partial_x(N_1)\|_{L^{\infty}}
		\|N_1\|_{L^{\infty}}
		\notag\\
		&+\|1_{\geq 0}e^{-|\cdot|}\|_{L^1} (\|N_1-N_2\|_{L^{\infty}}\|N_{1x}\|_{L^{\infty}}+\|N_{1x}-N_{2x}\|_{L^{\infty}}\|N_{2}\|_{L^{\infty}})
		\notag\\
		&+\|1_{\leq 0}e^{-|\cdot|}\|_{L^1} (\|N_1-N_2\|_{L^{\infty}}\|N_{1x}\|_{L^{\infty}}+\|N_{1x}-N_{2x}\|_{L^{\infty}}\|N_{2}\|_{L^{\infty}}))
		\notag\\
		&\leq C\|U_1-U_2\|_{L^{\infty}}(C+\|N_1-N_2\|_{L^{\infty}})
		\notag\\
		&\|G^{-1}(N_1N_{1x}-N_2N_{2x})\|_{L^p}\leq C(\|U_1-U_2\|_{L^{\infty}} \|(1_{\geq 0}e^{-|\cdot|}\ast \frac{1}{2}\partial_x(N_1)^2+
		1_{\leq 0}e^{-|\cdot|}\ast \frac{1}{2}\partial_x(N_1)^2)\|_{L^p}+
		\notag\\
		&\|1_{\geq 0}e^{-|\cdot|}\ast\frac{1}{2}(N_1^2-N_2^2)_x
		+1_{\leq 0}e^{-|\cdot|}\ast\frac{1}{2}(N_1^2-N_2^2)_x\|_{L^p})
		\notag\\
		&\leq C(\|U_1-U_2\|_{L^{\infty}} \frac{1}{2}\|(1_{\geq 0}e^{-|\cdot|}\|_{L^1} \|\partial_x(N_1)\|_{L^p}
		\|N_1\|_{L^{\infty}}+
		\|1_{\leq 0}e^{-|\cdot|}\|_{L^1} \frac{1}{2}\|\partial_x(N_1)\|_{L^p}
		\|N_1\|_{L^{\infty}}
		\notag\\
		&+\|1_{\geq 0}e^{-|\cdot|}\|_{L^1} (\|N_1-N_2\|_{L^{p}}\|N_{1x}\|_{L^{\infty}}+\|N_{1x}-N_{2x}\|_{L^{p}}\|N_{2}\|_{L^{\infty}})
		\notag\\
		&+\|1_{\leq 0}e^{-|\cdot|}\|_{L^1} (\|N_1-N_2\|_{L^{p}}\|N_{1x}\|_{L^{\infty}}+\|N_{1x}-N_{2x}\|_{L^{p}}\|N_{2}\|_{L^{\infty}}))
		\notag\\
		&\leq C\|U_1-U_2\|_{L^{\infty}}(C+\|N_1-N_2\|_{L^{p}})
	\end{align}
	Similarly, we can obtain
	\begin{align}
		&\partial_xG^{-1}N_{1\xi}^2-\partial_xG^{-1}N_{2\xi}^2=\int_{\mathbb{R}} sgn(y_1(\xi)-x)e^{-|y_1(\xi)-x|}n_{1x}^2dx-\int_{\mathbb{R}} sgn(y_2(\xi)-x)e^{-|y_2(\xi)-x|}n_{2x}^2dx
		\notag \\
		&=\int_{\mathbb{R}} sgn(y_1(\xi)-y_{1\eta}(\eta))e^{-|y_1(\xi)-y_1(\eta)|}N_{1\eta}^2\frac{1}{y_{1\eta}}d\eta-\int_{\mathbb{R}} sgn(y_2(\xi)-y_{2\eta}(\eta))e^{-|y_2(\xi)-y_2(\eta)|}N_{2\eta}^2\frac{1}{y_{2\eta}}d\eta
		\notag \\
		&=\int_{\mathbb{R}} sgn(\xi-\eta)e^{-|y_1(\xi)-y_1(\eta)|}N_{1\eta}^2\frac{1}{y_{1\eta}}d\eta-\int_{\mathbb{R}} sgn(\xi-\eta)e^{-|y_1(\xi)-y_1(\eta)|}N_{2\eta}^2\frac{1}{y_{2\eta}}d\eta
		\notag \\
		&~~~+\int_{\mathbb{R}} sgn(\xi-\eta)e^{-|y_1(\xi)-y_1(\eta)|}N_{2\eta}^2\frac{1}{y_{2\eta}}d\eta
		-\int_{\mathbb{R}} sgn(\xi-\eta)e^{-|y_2(\xi)-y_2(\eta)|}N_{2\eta}^2\frac{1}{y_{2\eta}}d\eta
		\notag \\
		&=\int_{\mathbb{R}}sgn(\xi-\eta)[e^{-|y_1(\xi)-y_1(\eta)|}-e^{-|y_1(\xi)-y_1(\eta)|}]
		\frac{{N_{1\eta}}^2}{y_{1\eta}}+e^{-|y_1(\xi)-y_1(\eta)|}(\frac{{N_{1\eta}}^2}{y_{1\eta}}-\frac{{N_{2\eta}}^2}{y_{2\eta}})d\eta
		\notag \\
		&\leq C\|N_1-N_2\|_{L^{\infty}}[1_{\leq 0}e^{-|\cdot|}\ast \frac{{N_{1\eta}}^2}{y_{1\eta}}+1_{\geq 0}e^{-|\cdot|}\ast \frac{{N_{1\eta}}^2}{y_{1\eta}}]
		\notag \\
		&+C[1_{\leq 0}e^{-|\cdot|}\ast (|N_{1\eta} -N_{2\eta}|+|y_{1\eta} -{y_{2\eta}}|)+1_{\geq 0}e^{-|\cdot|}\ast (|N_{1\eta} -N_{2\eta}|+|y_{1\eta} -y_{2\eta}|)].
	\end{align}
	And the last term we analyze is $G^{-1}G^{-1}[N_{1\xi}^2-N_{2\xi}^2]$, which is as the form of 
	\begin{align}
		&G^{-1}G^{-1}[N_{1x}^2-N_{2x}^2]=
		\notag \\
		&\int_{\mathbb{R}}\int_{\mathbb{R}}[e^{-|y_1(\xi)-y_1(\zeta)|}e^{-|y_1(\zeta)-x|}{N_{1\xi}}^2y_{1\zeta}]-[e^{-|y_2(\xi)-y_2(\zeta)|}e^{-|y_2(\zeta)-x|}{N_{2\xi}}^2(\eta)y_{2\zeta}]d\zeta dx
		\notag \\
		&=\int_{\mathbb{R}}\int_{\mathbb{R}}[e^{-|y_1(\xi)-y_1(\zeta)|}e^{-|y_1(\zeta)-y_1(\eta)|}{N_{1\xi}}^2\frac{y_{1\zeta}}{y_{1\eta}}]-[e^{-|y_2(\xi)-y_2(\zeta)|}e^{-|y_2(\zeta)-y_2(\eta)|}{N_{2\xi}}^2(\eta)\frac{y_{2\zeta}}{y_{2\eta}}]d\zeta d\eta
		\notag \\
		&=\int_{\mathbb{R}}\int_{\mathbb{R}}[e^{-|y_1(\xi)-y_1(\zeta)|}e^{-|y_1(\zeta)-y_1(\eta)|}{N_{1\xi}}^2\frac{y_{1\zeta}}{y_{1\eta}}]-[e^{-|y_1(\xi)-y_1(\zeta)|}e^{-|y_2(\zeta)-y_2(\eta)|}{N_{1\xi}}^2\frac{y_{1\zeta}}{y_{1\eta}}]
		\notag \\
		&~~~+[e^{-|y_1(\xi)-y_1(\zeta)|}e^{-|y_2(\zeta)-y_2(\eta)|}{N_{1\xi}}^2\frac{y_{1\zeta}}{y_{1\eta}}]
		-[e^{-|y_1(\xi)-y_1(\zeta)|}e^{-|y_2(\zeta)-y_2(\eta)|}{N_{2\xi}}^2(\eta)\frac{y_{2\zeta}}{y_{2\eta}}]
		\notag \\
		&~~~+[e^{-|y_1(\xi)-y_1(\zeta)|}e^{-|y_2(\zeta)-y_2(\eta)|}{N_{2\xi}}^2\frac{y_{2\zeta}}{y_{2\eta}}]-[e^{-|y_2(\xi)-y_2(\zeta)|}e^{-|y_2(\zeta)-y_2(\eta)|}{N_{2\xi}}^2(\eta)\frac{y_{2\zeta}}{y_{2\eta}}]d\zeta d\eta
		\notag \\
		&\leq C\int_{\mathbb{R}}\|N_1-N_2\|_{L^{\infty}}[e^{-|y_1(\xi)-y_1(\zeta)|}1_{\geq 0}e^{-|\cdot|}\ast ({N_{1\xi}}^2\frac{y_{1\zeta}}{y_{1\eta}})]-[e^{-|y_1(\xi)-y_1(\zeta)|}1_{\textless 0}e^{-|\cdot|}\ast ({N_{1\xi}}^2\frac{y_{1\zeta}}{y_{1\eta}})]d\zeta
		\notag \\
		&~~~+1_{\leq 0}e^{-|\cdot|}\ast e^{-|\cdot|}\ast(|N_{1\xi}-N_{2\xi}|+|y_{1\xi}-y_{2\xi}|)
		+1_{\textgreater 0}e^{-|\cdot|}\ast e^{-|\cdot|}\ast(|N_{1\xi}-N_{2\xi}|+|y_{1\xi}-y_{2\xi}|)
		\notag \\
		&~~~+C\int_{\mathbb{R}}\|N_1-N_2\|_{L^{\infty}}[1_{\geq 0}e^{-|\cdot|}\ast (e^{-|\cdot-y_2(\zeta)|}{N_{2\xi}}^2\frac{y_{2\zeta}}{y_{2\eta}})]-[1_{\textless 0}e^{-|\cdot|}\ast (e^{-|\cdot-y_1(\zeta)|}{N_{2\xi}}^2\frac{y_{2\zeta}}{y_{2\eta}})]d\eta
		\notag \\
		&\leq C\|N_1-N_2\|_{L^{\infty}}[1_{\geq 0}e^{-|\cdot|}\ast e^{-|\cdot|}\ast ({N_{1\xi}}^2)]-[1_{\textless 0}e^{-|\cdot|}\ast e^{-|\cdot|}\ast ({N_{1\xi}}^2)]
		\notag \\
		&~~~+1_{\leq 0}e^{-|\cdot|}\ast e^{-|\cdot|}\ast(|N_{1\xi}-N_{2\xi}|+|y_{1\xi}-y_{2\xi}|)
		+1_{\textgreater 0}e^{-|\cdot|}\ast e^{-|\cdot|}\ast(|N_{1\xi}-N_{2\xi}|+|y_{1\xi}-y_{2\xi}|).
	\end{align}
	So we have 
	\begin{align}
		\\
		&\|G^{-1}G^{-1}[N_{1x}^2-N_{2x}^2]\|_{L^{\infty}}\leq
		\notag\\ &C\|N_1-N_2\|_{L^{\infty}}(\|e^{-|\cdot|}\|_{L^1}^2\|N_{1\xi}\|_{L^{\infty}}^2+\|e^{-|\cdot|}\|_{L^1}^2(\|N_{1\xi}^2-N_{2\xi}^2\|_{L^{\infty}}+\|y_{1\xi}^2-y_{2\xi}^2\|_{L^{\infty}}))
		\notag\\
		&\|G^{-1}G^{-1}[N_{1x}^2-N_{2x}^2]\|_{L^p}\leq
		\notag\\ &C\|N_1-N_2\|_{L^p}(\|e^{-|\cdot|}\|_{L^1}^2\|N_{1\xi}\|_{L^p}^2+\|e^{-|\cdot|}\|_{L^1}^2(\|N_{1\xi}^2-N_{2\xi}^2\|_{L^p}+\|y_{1\xi}^2-y_{2\xi}^2\|_{L^p}))
	\end{align}

	Then the proof of the other terms will follow the same line,the details should be omitted.And it is easy to check that 
	\begin{align}
		&\|U_1-U_2\|_{L^{\infty}\bigcap L^{P}}\leq \|N_1-N_2\|_{W^{1,\infty}\bigcap W^{1,P}}+
		\|y_1-y_2\|_{W^{1,\infty}\bigcap W^{1,P}}
		\notag \\
		&\|N_1U_1-N_2U_2\|_{L^{\infty}\bigcap L^{P}}\leq \|N_1-N_2\|_{L^{\infty}\bigcap L^{P}}+
		\|U_1-U_2\|_{L^{\infty}\bigcap L^{P}}
		\notag \\
		&\leq C\|N_1-N_2\|_{L^{\infty}\bigcap L^{P}}
	\end{align}
	Acroding to the analysis above, it's not difficult to check that
	\begin{align}\label{24}
		\|\tilde{Q}_{1}(t,\xi)-\tilde{Q}_{2}(t,\xi)\|_{L^{\infty}\cap L^p}\leq C(\|N_1-N_2\|_{L^{\infty}\cap L^p}+\|N_{1\eta}-N_{2\eta}\|_{L^{\infty}\cap L^p}+\|y_{1\eta}-y_{2\eta}\|_{L^{\infty}\cap L^p}).
	\end{align}
	In the same way, we have
	\begin{align}\label{25}
		\|\tilde{Q}_{1\xi}(t,\xi)-\tilde{Q}_{2\xi}(t,\xi)\|_{L^{\infty}\cap L^p}\leq C(\|N_1-N_2\|_{L^{\infty}\cap L^p}+\|N_{1\eta}-N_{2\eta}\|_{L^{\infty}\cap L^p}+\|y_{1\eta}-y_{2\eta}\|_{L^{\infty}\cap L^p}).
	\end{align}
	Combining \eqref{24} and \eqref{25}, we can find that
	\begin{align}\label{26}
		\|\tilde{Q}_{1}(t,\xi)-\tilde{Q}_{2}(t,\xi)\|_{W^{1,p}\cap W^{1,\infty}}\leq C(\|N_1-N_2\|_{W^{1,p}\cap W^{1,\infty}}+\|y_1-y_2\|_{W^{1,p}\cap W^{1,\infty}}).
	\end{align}
	Moreover,
	\begin{align}\label{27}
		&\|N_1-N_2\|_{W^{1,p}\cap W^{1,\infty}}+\|y_1-y_2\|_{W^{1,p}\cap W^{1,\infty}}\notag\\&\leq C (\|N_1(0)-N_2(0)\|_{W^{1,p}\cap W^{1,\infty}}+\|y_1(0)-y_2(0)\|_{W^{1,p}\cap W^{1,\infty}})\notag\\
		&~~+C\int_0^T(\|N_1-N_2\|_{W^{1,p}\cap W^{1,\infty}}+\|y_1-y_2\|_{W^{1,p}\cap W^{1,\infty}})dt.
	\end{align}
	It means that
	\begin{align}
		&\|N_1-N_2\|_{W^{1,p}\cap W^{1,\infty}}+\|y_1-y_2\|_{W^{1,p}\cap W^{1,\infty}}\notag\\&\leq  e^{CT}(\|N_1(0)-N_2(0)\|_{W^{1,p}\cap W^{1,\infty}}+\|y_1(0)-y_2(0)\|_{W^{1,p}\cap W^{1,\infty}})\notag\\&\leq e^{CT}(\|N_1(0)-N_2(0)\|_{W^{1,p}\cap W^{1,\infty}}+0)\notag\\&\leq e^{CT} \|n^0_1-n^0_2\|_{B^{1+\frac{1}{p}}_{p,1}},
	\end{align}
	where we use the fact that  $y_1(0)=y_2(0)=\xi.$ It follows that
	\begin{align*}
		\|n_1-n_2\|_{L^p}&\leq C \|n_1 \circ y_1-	n_2 \circ y_1\|_{L^p}\notag\\&\leq C\|n_1 \circ y_1-n_2 \circ y_2+n_2 \circ y_2-n_2 \circ y_1\|_{L^p} \notag\\&\leq C\|N_1-N_2\|_{L^p}+C\|n_{2x}\|_{L^{\infty}}\|y_1-y_2\|_{L^p}\notag\\&\leq C \|n^0_1-n^0_2\|_{B^{1+\frac{1}{p}}_{p,1}}.
	\end{align*}
	And the key to the uniqueness is
	\begin{align}
		&\|\gamma_1-\gamma_2\|_{B^{\frac{1}{p}}_{p,1}}\leq C\|(\partial_x -1)n_1-(\partial_x -1)n_2\|_{B^{\frac{1}{p}}_{p,1}}
		\notag \\
		&\leq C\|n_1-n_2\|_{B^{1+\frac{1}{p}}_{p,1}}
	\end{align}
	The embedding $L^p\hookrightarrow  B^{0}_{p,\infty}$ ensures that
	\begin{align*}
		\|n_1-n_2\|_{B^{0}_{p,\infty}}\leq C\|n_1-n_2\|_{L^p}\leq C\|n^0_1-n^0_2\|_{B^{1+\frac{1}{p}}_{p,1}}.
	\end{align*}
	If $\gamma^0_1=\gamma^0_2,$ the uniqueness of the solution is proved.\\
	
	\textbf{Step 3: The continuous dependence.}
	
	Let $n_n,n_{\infty}$ be the solutions of \eqref{eq1} with the initial data $n_{0n},n_{0\infty}$ respectively and let $n_{0n}$ tend to $n_{0\infty}$ in $B^{1+\frac1 p }_{p,1}.$ Combining \textbf{Step 1} and \textbf{Step 2}, due to the uniform bound of $\gamma$ we get $n_n,~n_{\infty}$ which are uniformly bounded in $L^{\infty}_T(B^{1+\frac1 p }_{p,1})$ and
	\begin{align}\label{4b}
		\|(n_m-n_{\infty})(t)\|_{B^{0 }_{p,\infty}}\leq C\|n^m_0-n^{\infty}_0\|_{B^{1+\frac1 p }_{p,1}}.
	\end{align}
	This means that  $n_m$ tends to $n_{\infty}$ in $C([0,T],B^{0 }_{p,\infty}).$ Using interpolation, we see that $n_m\rightarrow n_{\infty}$ in $C([0,T],B^{1+\frac{1}{p}-\varepsilon}_{p,1})$ for any $\varepsilon >0$. If $\varepsilon =1$,  we get
	\begin{align}\label{a0}
		n_m\rightarrow n_{\infty} \ in \ C([0,T],B^{\frac{1}{P}}_{p,1}).
	\end{align}
	Combining \eqref{4b} and \eqref{a0}, we only need to prove that  $\partial_xn_m\rightarrow\partial_xn_{\infty}$ in $C([0,T],{B^{\frac1 p }_{p,1}}).$ Letting $v_m=\partial_xn_m,$  we split $v_m=z_m+w_m$  with $(z_m,w_m)$ satisfying
	\begin{equation*}
		\left\{\begin{aligned}
			&\partial_tz_m-u_m\partial_xz_m=\partial_xQ_{\infty}+\partial_x u_{\infty}\partial_xn_{\infty}, \\
			&z_m|_{t=0}=\partial_xu_{0{\infty}}. \\
		\end{aligned}\right.
	\end{equation*}
	and
	\begin{equation*}
		\left\{\begin{aligned}
			&\partial_tw_m-u_m\partial_xw_m=\partial_xQ_{m}-\partial_xQ_{\infty}+
			-(\partial_xu_m\partial_xn_m-\partial_xu_\infty\partial_xn_\infty), \\
			&w_m|_{t=0}=\partial_xu_{0m}-\partial_xu_{0{\infty}},
		\end{aligned}\right.
	\end{equation*}
	Making use of the fact that $(u_m)_{m\in \mathbb{N}} $ and $u_{\infty} $ are bounded in $B^{1+\frac{1}{p}}_{p,1},$ we have
	\begin{align}\label{eqc5}
		\|\partial_x(u_m)\|_{B^{\frac{1}{p}}_{p,1}}&\leq C\|(u_m)\|_{B^{1+\frac{1}{p}}_{p,1}},\\
		\|\partial_x u_{m}\partial_x n_{m}-\partial_x u_{\infty}\partial_x n_{\infty}\|_{B^{\frac{1}{p}}_{p,1}}&\leq C \|u_m-u_{\infty}\|_{B^{1+\frac{1}{p}}_{p,1}}\|n_m-n_{\infty}\|_{B^{1+\frac{1}{p}}_{p,1}}
	\end{align}
	and
	\begin{align}
		\|\partial_x Q_n-\partial_xQ_{\infty}\|_{B^{\frac{1}{p}}_{p,1}}& \leq C \|n_m-n_{\infty}\|_{B^{1+\frac{1}{p}}_{p,1}} \notag\\
		&\leq C \Big(\|n_m-n_{\infty}\|_{B^{\frac{1}{p}}_{p,1}}+\|\partial_xn_m-\partial_xn_{\infty}\|_{B^{\frac{1}{p}}_{p,1}}
		\Big).\label{cau}
	\end{align}
	Then, using the inequalities \eqref{eqc5}-\eqref{cau}, we get, for all $n\in \mathbb{N},$
	\begin{align*}
		\partial_t\|w_m	(t)\|_{B^{\frac{1}{p}}_{p,1}}&\leq C\Big(\|n_m-n_{\infty}\|_{B^{\frac{1}{p}}_{p,1}}+\|\partial_xn_m
		-\partial_xn_{\infty}\|_{B^{\frac{1}{p}}_{p,1}}\Big)\notag\\
		&\leq C\Big(\|n_m-n_{\infty}\|_{B^{\frac{1}{p}}_{p,1}}+\|z_m
		-z_{\infty}\|_{B^{\frac{1}{p}}_{p,1}}+\|w_m
		(t)\|_{B^{\frac{1}{p}}_{p,1}}\Big),
	\end{align*}
	from which it follows
	\begin{align}\label{cau1}
		\|w_m	(t)\|_{B^{\frac{1}{p}}_{p,1}}& \leq Ce^{Ct} \Big(\|v^0_m-v^0_{\infty}\|_{B^{\frac{1}{p}}_{p,1}}+\int_0^t e^{-Ct'}  (\|n_m-n_{\infty}\|_{B^{\frac{1}{p}}_{p,1}}+\|z_m-z_{\infty}\|_{B^{\frac{1}{p}}_{p,1}}+\|w_m
		\|_{B^{\frac{1}{p}}_{p,1}})	(t')dt'\Big).
	\end{align}
	Remember that $v_{0m}$ tends to $v_{0\infty}$ in $B^{\frac{1}{p}}_{p,1}, n_m$ tends to $n_{\infty}$ in $C([0,T]; B^{\frac{1}{p}}_{p,1}),$  By  Lemma \ref{cont1}, we have 	$z_m	(t) \rightarrow z_{\infty}	(t) \  in \  C([0,T];B^{\frac{1}{p}}_{p,1}).$ Hence we obtain that $w_m$ tends to $0$ in $C([0,T]; B^{\frac{1}{p}}_{p,1}).$ Therefore, applying Lemmas \ref{existence} , \ref{priori estimate} and $w_{\infty}=0$, we get $w_m$ tends to $w_{\infty}$ in $C([0,T]; B^{\frac{1}{p}}_{p,1}).$

	Finally, we conclude that
	\begin{align}
		\|v_m-v_{\infty}\|_{L^{\infty}_T (B^{\frac{1}{p}}_{p,1})}&\leq \|z_m-z_{\infty}\|_{{L^{\infty}_T (B^{\frac{1}{p}}_{p,1})}}+\|w_m-w_{\infty}\|_{{L^{\infty}_T (B^{\frac{1}{p}}_{p,1})}}\notag\\
		&\leq \|z_m-z_{\infty}\|_{{L^{\infty}_T (B^{\frac{1}{p}}_{p,1})}}+\|w_m\|_{{L^{\infty}_T (B^{\frac{1}{p}}_{p,1})}},\label{ok}
	\end{align}
	which implies that
	\begin{align*}
		\partial_xn_m \rightarrow \partial_xn_{\infty}  \ \ \ in \ \ \ C([0,T]; B^{\frac{1}{p}}_{p,1}).
	\end{align*}
	Combining  \textbf{Step 1} to \textbf{Step 3}, we complete  the proof of Theorem \ref{the1}.
\end{proof}
\section{Persistence properties}
\par\
In this section,we study if a classical solution $\gamma$ of $ \eqref{eq0}$ starts out having compact support, then this property will be inherited by $\gamma$ at all times $t\in[0, T).$
\begin{theo}\label{the2}
	Let $\gamma_0$ be in $H^s,s\geq 4$ such that $\gamma_0$ has compact support.If T=T($\gamma_0$)$\textgreater 0$is the maximal existence time of the unique solution $\gamma(x,t)$ to $\eqref{eq0}$ with initial data $\gamma_0$, then for any t$\in [0,T)$,$\gamma$has compact support.
\end{theo}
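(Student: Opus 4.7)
The plan is to transport the problem to Lagrangian coordinates and to exploit the explicit exponential kernel of $G^{-1}$. Since $\gamma_0 \in H^s$ with $s \geq 4$, the cited Sobolev local well-posedness theory produces a unique $\gamma \in C([0,T); H^s) \cap C^1([0,T); H^{s-1})$, so $u := G^{-1} m$ with $m = \gamma_x + \gamma^2/(2\lambda)$ lies in $C([0,T); H^{s+1})$. By the flow-map lemma cited above, the ODE $q_t(t, \xi) = u(t, q(t, \xi))$, $q(0, \xi) = \xi$, admits a unique $C^1$ increasing diffeomorphism $q(t, \cdot) : \mathbb{R} \to \mathbb{R}$ with strictly positive Jacobian.

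Let $[a, b] \supset \mathrm{supp}\, \gamma_0$ and set $I(t) := [q(t, a), q(t, b)]$, a compact interval. It suffices to prove $\mathrm{supp}\, \gamma(t, \cdot) \subset I(t)$ for every $t \in [0, T)$. For any $\xi \notin [a, b]$, pulling back along the flow and using \eqref{eq1}, the function $N(t) := \gamma(t, q(t, \xi))$ satisfies the Riccati-type ODE
\[
\dot{N}(t) = \tfrac{1}{2} N(t)^2 - u_x(t, q(t, \xi))\, N(t) + \lambda\, u(t, q(t, \xi)), \qquad N(0) = 0.
\]
Using the convolution $u(t, x) = -\tfrac{1}{2} \int_{\mathbb{R}} e^{-|x-y|} m(t, y)\, dy$, as long as $m(t, \cdot)$ remains supported in $I(t)$ we have, for $\xi > b$, $u(t, q(t, \xi)) = -\tfrac{1}{2} e^{-q(t, \xi)} M_+(t)$ and $u_x(t, q(t, \xi)) = -u(t, q(t, \xi))$, where $M_\pm(t) := \int_{\mathbb{R}} e^{\pm y} m(t, y)\, dy$; a symmetric formula holds for $\xi < a$.

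The key step is to derive closed evolution equations for the moments $M_\pm(t)$ by multiplying the PDE for $m$ by $e^{\pm y}$ and integrating by parts, the boundary terms at infinity vanishing by the compact support of $m(t, \cdot)$. This produces linear ODEs for $M_\pm$ whose coefficients depend only on the values of $\gamma$ and $u$ inside $I(t)$, forming a coupled system with the Riccati ODE for $N$. A standard bootstrap/continuity-in-time argument then closes the loop: letting $t^\ast$ be the supremum of times on which $\mathrm{supp}\,\gamma(\tau, \cdot) \subset I(\tau)$ holds on $[0, t^\ast]$, the ODE uniqueness theorem and a Gronwall estimate applied to $(N, M_+, M_-)$ will show that no support can emerge outside $I(t)$ at $t^\ast$, so $t^\ast = T$. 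Finally, compact support of $m(t, \cdot)$ transfers back to $\gamma(t, \cdot)$ through the pointwise ODE relation $\gamma_x = m - \gamma^2/(2\lambda)$ combined with the decay $\gamma(t, x) \to 0$ as $|x| \to \infty$ inherited from $H^s$.

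The hardest part is handling the nonlocal forcing $\lambda u$ in the Riccati ODE for $N$: this term does not automatically vanish outside $I(t)$, and it is a genuine obstruction absent from the classical Camassa-Holm case where $\lambda = 0$. The proof therefore hinges on a careful algebraic identity between the moment ODEs for $M_\pm$ and the Riccati equation for $N$, reflecting the specific conservation structure of \eqref{eq0} and the coupling through the constant $\lambda$ appearing in both the source term of $\gamma_t$ and the definition of $m$. Identifying and verifying this cancellation is where the real work lies; once it is in place, the remaining pieces are routine applications of the ODE uniqueness theorem and Gronwall's inequality.
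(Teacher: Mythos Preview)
Your proposal leaves the decisive step open: you correctly isolate the nonlocal forcing $\lambda u$ in the Riccati equation for $N$ as the obstruction, but then defer to an unspecified ``careful algebraic identity'' between the moment ODEs for $M_{\pm}$ and the equation for $N$. That identity is never stated, and there is no indication that such a cancellation exists. Indeed, for generic compactly supported $\gamma_0$ one has $u(0,\xi)=G^{-1}m_0(\xi)\neq 0$ for $\xi\notin[a,b]$, so $\dot N(0)=\lambda u(0,\xi)\neq 0$ and $N$ leaves zero instantly; your bootstrap hypothesis $\mathrm{supp}\,\gamma(t,\cdot)\subset[q(t,a),q(t,b)]$ therefore fails at $t=0^{+}$. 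The support of $\gamma$ is simply not carried by the $u$-flow.

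The paper avoids this entirely by working not with the reformulation \eqref{eq1} but with the original system \eqref{eq0}. Substituting the constraint $v_{xx}=\gamma_x+\gamma^{2}/(2\lambda)$ into the first line of \eqref{eq0} yields the purely local identity
\[
\gamma_t+v\gamma_x+v_x\gamma=\tfrac{1}{2}\gamma^{2},
\]
whose right-hand side vanishes wherever $\gamma$ does. Along the $v$-flow $y_t=v(t,y)$, $y(0,x)=x$, one then computes in one line
\[
\frac{d}{dt}\big(\gamma(t,y)\,y_x\big)=\frac{\gamma}{2}\,\big(\gamma(t,y)\,y_x\big),
\]
a homogeneous linear ODE. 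Hence $\gamma(t,y(t,x))\,y_x(t,x)=\gamma_0(x)\exp\!\big(\int_0^t\tfrac{1}{2}\gamma\,d\tau\big)$, and since $y_x>0$ this gives $\gamma(t,y(t,x))=0$ whenever $\gamma_0(x)=0$. Compact support is transported by the diffeomorphism $y(t,\cdot)$, with no nonlocal term to contend with. The obstruction you spend most of your outline on is an artifact of having chosen the velocity $u=G^{-1}m$ from \eqref{eq1} rather than $v$ from \eqref{eq0}; switch to the latter and the whole moment machinery becomes unnecessary.
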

\begin{proof}
	For the first equation of \eqref{eq0} and by Lemma 2.11, we have 
	\begin{align}
		\gamma_t=&\lambda(\gamma_x+\frac{\gamma^2}{2\lambda}-\gamma_x-\frac{1}{\lambda}v_x\gamma-\frac{1}{\lambda}v\gamma_x)
		\notag\\
		&=(\lambda \gamma_x+\frac{\gamma^2}{2}-\lambda\gamma_x-v_x\gamma-v\gamma_x)
		\notag\\
		&=\frac{\gamma^2}{2}-v_x\gamma-v\gamma_x.
	\end{align}
	Then we consider the initial problem $\eqref{eq102}$
	\begin{equation}\label{eq102}
		\left\{\begin{array}{l}
			y_t=v(t,y),t\in [0,T) , \\
			
			y(0,x)=x,x\in\mathbb{R}.
		\end{array}\right.
	\end{equation}
	And we will have $y_xt=v_xy_x$
	\begin{align}
		\frac{d}{dt}(\gamma y_x)=&\frac{d}{dt}\gamma y_x+\gamma_x\frac{d}{dt}y y_x+\gamma\frac{d}{dt}yx
		\notag\\
		&=\frac{d}{dt}\gamma y_x+\gamma_xv y_x+\gamma v_xy_x
		\notag\\
		&=\frac{\gamma^2}{2}y_x .
	\end{align}
	Then we can find $\gamma y_x=(\gamma_0 y_x(0))exp(\int\frac{\gamma}{2}dt^{\prime})$.
	So if$\gamma_0$has compact support,then $\gamma$has compact suppport.
\end{proof}
\smallskip
\noindent\textbf{Acknowledgments} This work was
partially supported by the National Natural Science Foundation of China (No.12171493).

\noindent\textbf{Data Availability.}
The data that support the findings of this study are available on citation. The data that support the findings of this study are also available
from the corresponding author upon reasonable request.

	\phantomsection
	\addcontentsline{toc}{section}{\refname}
	\bibliographystyle{abbrv} 
	\bibliography{Feneref}
\end{document}